\crefname{hypothesis}{Hypothesis}{Hypotheses}
\Crefname{ALC@unique}{Line}{Lines}
\colorlet{texcscolor}{blue!50!black}
\colorlet{texemcolor}{red!70!black}
\colorlet{texpreamble}{red!70!black}
\colorlet{codebackground}{black!25!white!25}
\lstdefinestyle{siamlatex}{%
  style=tcblatex,
  texcsstyle=*\color{texcscolor},
  texcsstyle=[2]\color{texemcolor},
  keywordstyle=[2]\color{texemcolor},
  moretexcs={cref,Cref,maketitle,mathcal,text,headers,email,url},
}
\DeclareTotalTCBox{\code}{ v O{} }
{ 
  fontupper=\ttfamily\color{black},
  nobeforeafter,
  tcbox raise base,
  colback=codebackground,colframe=white,
  top=0pt,bottom=0pt,left=0mm,right=0mm,
  leftrule=0pt,rightrule=0pt,toprule=0mm,bottomrule=0mm,
  boxsep=0.5mm,
  #2}{#1}
\patchcmd\newpage{\vfil}{}{}{}
\title{Sparse Sampling Kaczmarz-Motzkin Method with Linear Convergence}
\author{Ziyang Yuan\thanks{Mathematical Department, National University of Defense Technology (\email{yuanziyang11@nudt.edu.cn}).}
\and Hui Zhang\thanks{Corresponding Author, Mathematical Department, National University of Defense Technology ( \email{h.zhang1984@163.com})}\and Hongxia Wang\thanks{Mathematical Department, National University of Defense Technology ( \email{wanghongxia@nudt.edu.cn})}.}
\begin{document}
\maketitle

\begin{tcbverbatimwrite}{tmp_\jobname_abstract.tex}
\begin{abstract}
The randomized sparse Kaczmarz method was recently proposed to recover sparse solutions of linear systems. In this work, we introduce a greedy variant of the randomized sparse Kaczmarz method by employing the sampling Kaczmarz-Motzkin method, and prove its linear convergence in expectation with respect to the Bregman distance in the noiseless and noisy cases. This greedy variant can be viewed as a unification of the sampling Kaczmarz-Motzkin method and the randomized sparse Kaczmarz method, and hence inherits the merits of these two methods. Numerically, we report a couple of experimental results to demonstrate its superiority.
\end{abstract}

\begin{keywords}
 Sampling Kaczmarz-Motzkin, Bregmann projection, Sparse. 
\end{keywords}

\begin{AMS}
  00A20 
\end{AMS}
\end{tcbverbatimwrite}
\input{tmp_\jobname_abstract.tex}

\section{Introduction}
\label{sec:intro}
The Kaczmarz method, originally appeared in \cite{Kaczmarz1937Angenaherte},  might be the most well-known method for finding an approximation solution to large-scale linear systems of the form
\begin{eqnarray}
\mathbf{A}\mathbf{x} = \mathbf{b},\label{problem}  
\end{eqnarray}
where $\mathbf{A}\in\mathbb{R}^{m\times n}$
and $\mathbf{b}\in\mathbb{R}^m$.  It has been utilized in a large range of applications such as computer tomography \cite{1970Algebraic}, digital signal processing \cite{1987Sezan}, distributed computing \cite{censor2002the} and many other engineering and physics problems. 
The standard Kaczmarz iterative scheme reads as  
\begin{eqnarray}\label{framework}
\mathbf{x}_{k+1}=\mathbf{x}_{k}- \frac{\langle\mathbf{a}_{i}, \mathbf{x}_{k}\rangle-b_{i}}{\left\|\mathbf{a}_{i}\right\|_{2}^{2}} \mathbf{a}_{i},
\end{eqnarray}
where $\mathbf{a}_i$ is the vector transposed by the $i$-th row of $\mathbf{A}$, $b_i$ is the $i$-th entry of $\mathbf{b}$, and the index $i$ is chosen cyclically. Geometrically, (\ref{framework}) says that $\mathbf{x}_{k+1}$ is obtained by projecting $\mathbf{x}_k$ onto the hyperplane $\{\mathbf{x}\in\mathbb{R}^n:\langle\mathbf{a}_i,\mathbf{x}\rangle=b_i\}$. When it is initialized with $\mathbf{x}_0 = \bm{0}$, the iteration sequence $\{\mathbf{x}_k\}$ generated by the Kaczmarz method converges linearly to the minimum 2-norm solution $\hat{\mathbf{x}}$ of $\mathbf{A}\mathbf{x} = \mathbf{b}$. However, the rate of convergence is hard to estimate. In 2009, the authors of
\cite{2009AVershyin} first analyzed a randomized variant of the Kaczmarz method. Instead of choosing $i$ cyclically, it updates $\mathbf{x}_k$ via (\ref{framework}) at random by choosing the $i$-th row with probability $\frac{\|\mathbf{a}_i\|^2_2}{\|\mathbf{A}\|^2_{\mathrm{F}}}$. Theoretically, they showed that the randomized Kaczmarz method converges linearly in the sense that 
\begin{equation}\label{lin}
\mathbb{E}\left[\left\|\mathbf{x}_{k+1}-\hat{\mathbf{x}}\right\|_{2}^{2}\right] \leq\left(1-\frac{\sigma^2_{\min}(\mathbf{A})}{\|\mathbf{A}\|_{\text{F}}^2}\right) \cdot \mathbb{E}\left[\left\|\mathbf{x}_{k}-\hat{\mathbf{x}}\right\|_{2}^{2}\right],
\end{equation}
where 
$\|\mathbf{A}\|_{\text{F}}$ is the Frobenius norm and $\sigma_{\min}(\mathbf{A})$ denotes the smallest singular value of $\mathbf{A}$. This elegant result triggers a great of researches into developing new variants and corresponding convergence analysis; seeing \cite{briskman2015block,leventhal2010randomized,needell2014paved,7032145,eldar2011acceleration,needell2014stochastic}. Recently, finding sparse solutions of linear systems becomes a popular topic in data science and machine learning. In this paper, we focus on another variant of the Kaczmarz method namely the randomized sparse Kaczmarz method, which was developed in
\cite{lorenz2014the,7025269,2016Linear}. Specifically, the iterative procedure of randomized sparse Kaczmarz method can be formulated as
\begin{eqnarray}\label{SRK}
\begin{aligned}
\mathbf{x}_{k+1}^{*} &=\mathbf{x}_{k}^{*}-\frac{\left\langle \mathbf{a}_{i}, \mathbf{x}_{k}\right\rangle-\mathbf{b}_{i}}{\left\|\mathbf{a}_{i}\right\|_{2}^{2}} \cdot \mathbf{a}_{i}, \\
\mathbf{x}_{k+1} &=S_{\lambda}\left(\mathbf{x}
_{k+1}^{*}\right),
\end{aligned}
\label{sparse}
\end{eqnarray}
where the parameter $\lambda$ is a nonnegative parameter, and the index $i$ is chosen by the same probability of the randomized Kaczmarz method. The soft thresholding operator $S_{\lambda}(x)=\textrm{max}\{|x|-\lambda,0\}\cdot \textrm{sign}(x)$ is introduced here to generate sparse solutions. In the case of $\lambda=0$, $S_\lambda(\cdot)$ reduces to the identity operator so that $\mathbf{x}^*_{k+1}=\mathbf{x}_{k+1}$ and hence the randomized sparse Kaczmarz method generalizes randomized Kaczmarz method in \cite{2009AVershyin}. In the case of $\lambda\neq0$, a larger $\lambda$ is likely to generate a sparser solution. Interestingly, it has also been shown that randomized sparse Kaczmarz method still converges linearly in a similar way to \eqref{lin}\cite{2016Linear}. 

As a natural development, we wonder whether randomized sparse Kaczmarz method still works well, or even better, if it is equipped with more advanced sampling strategies that may accelerate the convergence rate. To this end, by employing the sampling Kaczmarz-Motzkin method, which essentially combines the random and greedy ideas together, we propose a new variant, called sparse sampling Kaczmarz-Motzkin method (SSKM). The proposed variant can be viewed as a blender of randomized sparse Kaczmarz method and the sampling Kaczmarz-Motzkin method. SSKM method randomly samples $\beta$ rows from $\mathbf{A}$, and then
greedily picks out the most-violated one among the $\beta$ rows(refer to the second column of Table \ref{convergence rate}). Hence, the SSKM method inherits the merits of these two methods. Theoretically, by introducing the concept of Bregman projection, we prove that the SSKM method converges linearly in expectation in the noiseless case. Especially, as listed in the third column of Table \ref{convergence rate}, it can have a faster convergence rate comparing to the previous results. Furthermore, we also show that the same linear convergence rate can be held even with noisy observed data. Finally, we demonstrate the superiority of SSKM method by groups of numerical experiments. 

The paper is organized as follows. Section 2 shows some theories about Bregman projection. In section 3, we introduce SSKM method and prove its linearly convergence. Section 4 reports the numerical tests. Section 5 is the conclusion.

%
%
\begin{table}
	\caption{Convergence rate comparison between different methods. The rows of $\mathbf{ A}$ are normalized, $\|\mathbf{ e}_k\|^2_2=\|\mathbf{A}\mathbf{x}^k-\mathbf{b}\|^2_2$,  $|\hat{\mathbf{x}}|_{\min}$ means the smallest nonzero absolute element, $\tilde{\sigma}_{\min}(\mathbf{A})$ is the non-zero smallest singular value, and $\beta_k/\gamma_k\geq 1$.}
	\centering
	\label{convergence rate}
	\begin{tabular}{|c|c|c|}
		\hline & \text { Selection Rule } & \text { Convergence Rate } \\
		\hline $\mathrm{RK}\cite{2009AVershyin}$ &$ \mathbb{P}\left(t_{j}=i\right)=\frac{\left\|\mathbf{a}_{i}\right\|^{2}_2}{\|\mathbf{A}\|_{\text{F}}^{2}}$ &$ \mathbb{E}\left\|\mathbf{e}_{k}\right\|^{2} \leq\left(1-\frac{\sigma_{\min }^{2}(\mathbf{A})}{\|\mathbf{A}\|_{\text{F}}^{2}}\right)^{k}\left\|\mathbf{e}_{0}\right\|^{2}_2$ \\
		\hline 
		$\mathrm{SRK}$\cite{2016Linear}
		&$\mathbb{P}\left(t_{j}=i\right)=\frac{\left\|\mathbf{a}_{i}\right\|^{2}_2}{\|\mathbf{A}\|_{\text{F}}^{2}}$& 
		$\mathbb{E}\left\|\mathbf{e}_{k}\right\|^{2} \leq\left(1-\frac{\tilde{\sigma}_{\min }^{2}(A)}{2m}\cdot\frac{|\hat{\mathbf{ x}}|_{\min}}{|\hat{\mathbf{ x}}|_{\min}+2\lambda}\right)^{k}\left\|\mathbf{e}_{0}\right\|^{2}_2$ 
		\\
		\hline 
			\multirow{2}{*}{$\mathrm{SSKM}$}
	 & $\tau_j\sim\binom{[m]}{\beta}$&	 \multirow{2}{*}{$\mathbb{E}\left\|\mathbf{e}_{k}\right\|^{2} \leq\prod_{i=0}^{k}\left(1-\frac{\beta_k\tilde{\sigma}_{\min }^{2}(A)}{2\gamma_km}\cdot\frac{|\hat{\mathbf{ x}}|_{\min}}{|\hat{\mathbf{ x}}|_{\min}+2\lambda}\right)\left\|\mathbf{e}_{0}\right\|^{2}_2$ }\\
	 & $t_{j}=\arg \max _{i\in\tau_j}(\mathbf{a}_{i}^{\top} \mathbf{x}_{j-1}-b_{i})^2$ &\\
		\hline
	\end{tabular}
\end{table}

\section{Preliminaries}
First, we recall some concepts and properties of convex functions.
\subsection{Basic notions}
Let $f:\mathbb{R}^n\rightarrow\mathbb{R}$ be convex. Define the subdifferential of $f$ at $\mathbf{x}\in\mathbb{R}^n$ by
$$\partial f(\mathbf{x}):=\{\mathbf{x}^*\in\mathbb{R}^n:f(\mathbf{y})\geq f(\mathbf{x})+\langle\mathbf{x}^*,\mathbf{y}-\mathbf{x}\rangle,\forall\mathbf{y}\in\mathbb{R}^n\}.$$
Each $\mathbf{x}^*$ is called a subgradient of $f$ at $\mathbf{x}$. Next, let us define the strong convexity.
\begin{definition}
	The function $f:\mathbb{R}^n\rightarrow\mathbb{R}$ is said to be strongly convex, if there exists $\alpha>0$, so that for any $\mathbf{x},\mathbf{y}\in\mathbb{R}^n$ and $\mathbf{x}^*\in\partial f(\mathbf{x})$, we have
	$$f(\mathbf{y})\geq f(\mathbf{x})+\langle\mathbf{x}^*,\mathbf{y}-\mathbf{x}\rangle+\frac{\alpha}{2}\|\mathbf{x}-\mathbf{y}\|_2^2.$$
\end{definition}

\noindent If the concrete value of $\alpha$ is involved, then $f$ is said to be $\alpha$-strongly convex. The Fenchel conjugate $f^*$ of  $f$ is given by,
$$f^*(\mathbf{x}):=\sup _{\mathbf{z} \in\mathbb{R}^n}\{\langle\mathbf{x},\mathbf{z}\rangle-f(\mathbf{z})\big\}.$$
There are many interesting connections between $f$ and $f^*$. Especially, as illustrated by the following fact, the strong convexity of $f$ can imply the smoothness of $f^*$, 
\begin{theorem}[\cite{rockafellar2009variational}]
If $f:\mathbb{R}^n\rightarrow\mathbb{R}$ is $\alpha$-strongly convex, then the Fenchel conjugate function $f^*$ is differentiable with $1/\alpha$-Lipschitz continuous gradient, that is
$$\|\nabla f^*(\mathbf{x})-\nabla f^*(\mathbf{y})\|_2\leq\frac{1}{\alpha}\cdot\|\mathbf{x}-\mathbf{y}\|_2,\forall\mathbf{x},\mathbf{y}\in\mathbb{R}^n.$$
\end{theorem}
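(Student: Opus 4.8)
The plan is to prove the statement in two stages: first establish that $f^*$ is differentiable everywhere, and then quantify the Lipschitz constant of $\nabla f^*$ through the conjugate subgradient correspondence. Throughout I would use only the definition of $f^*$, the Fenchel--Young inequality together with its equality condition, and the $\alpha$-strong convexity hypothesis.

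For the differentiability step, I would fix $\mathbf{x}\in\mathbb{R}^n$ and study the inner problem $\sup_{\mathbf{z}}\{\langle\mathbf{x},\mathbf{z}\rangle-f(\mathbf{z})\}$. Choosing any $\mathbf{g}\in\partial f(\mathbf{0})$, strong convexity gives $f(\mathbf{z})\geq f(\mathbf{0})+\langle\mathbf{g},\mathbf{z}\rangle+\tfrac{\alpha}{2}\|\mathbf{z}\|_2^2$, so $\langle\mathbf{x},\mathbf{z}\rangle-f(\mathbf{z})\to-\infty$ as $\|\mathbf{z}\|_2\to\infty$; since $f$ is finite and convex, hence continuous, the supremum is attained, and since the objective is $\alpha$-strongly concave the maximizer $\mathbf{z}_{\mathbf{x}}$ is unique. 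The optimality condition at this maximizer is $\mathbf{0}\in\partial f(\mathbf{z}_{\mathbf{x}})-\mathbf{x}$, i.e. $\mathbf{x}\in\partial f(\mathbf{z}_{\mathbf{x}})$, which is equivalent by Fenchel--Young to $\mathbf{z}_{\mathbf{x}}\in\partial f^*(\mathbf{x})$; conversely, every element of $\partial f^*(\mathbf{x})$ maximizes the inner problem. Hence $\partial f^*(\mathbf{x})=\{\mathbf{z}_{\mathbf{x}}\}$ is a singleton for every $\mathbf{x}$, and a finite convex function on $\mathbb{R}^n$ whose subdifferential is single-valued everywhere is differentiable, with $\nabla f^*(\mathbf{x})=\mathbf{z}_{\mathbf{x}}$.

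For the Lipschitz bound I would set $\mathbf{u}=\nabla f^*(\mathbf{x})$ and $\mathbf{v}=\nabla f^*(\mathbf{y})$, so that $\mathbf{x}\in\partial f(\mathbf{u})$ and $\mathbf{y}\in\partial f(\mathbf{v})$. Writing the strong convexity inequality for $f$ at $\mathbf{u}$ with subgradient $\mathbf{x}$, tested at $\mathbf{v}$, and again at $\mathbf{v}$ with subgradient $\mathbf{y}$, tested at $\mathbf{u}$, and then adding the two inequalities, the $f(\mathbf{u})$ and $f(\mathbf{v})$ terms cancel and leave $\langle\mathbf{x}-\mathbf{y},\mathbf{u}-\mathbf{v}\rangle\geq\alpha\|\mathbf{u}-\mathbf{v}\|_2^2$. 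Applying Cauchy--Schwarz to the left-hand side and dividing by $\|\mathbf{u}-\mathbf{v}\|_2$ (the inequality being trivial when $\mathbf{u}=\mathbf{v}$) yields $\|\mathbf{u}-\mathbf{v}\|_2\leq\tfrac{1}{\alpha}\|\mathbf{x}-\mathbf{y}\|_2$, which is exactly the claim.

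The main obstacle is not computational but foundational: one must carefully justify the two-way correspondence between maximizers of the inner problem and elements of $\partial f^*(\mathbf{x})$, i.e. the chain $\mathbf{x}\in\partial f(\mathbf{z})\iff f(\mathbf{z})+f^*(\mathbf{x})=\langle\mathbf{x},\mathbf{z}\rangle\iff\mathbf{z}\in\partial f^*(\mathbf{x})$, and then the standard fact that a finite convex function is differentiable precisely where its subdifferential is single-valued. Once these classical ingredients (all found in \cite{rockafellar2009variational}) are available, the remainder is the short argument above; since the statement itself is classical, in the paper I would simply cite \cite{rockafellar2009variational} and regard the preceding as an optional sketch.
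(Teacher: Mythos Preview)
Your proposal is correct, and indeed the paper does not supply any proof of this theorem at all: it merely states the result and cites \cite{rockafellar2009variational}. Your closing remark that one would simply cite the reference is exactly what the paper does; the sketch you give is a standard and valid argument that goes beyond what the paper provides.
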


\subsection{Bregman distance}
\begin{definition}[\cite{lorenz2014the}]\label{Breg}
	Let $f:\mathbb{R}^n\rightarrow\mathbb{R}$ be strongly convex. The Bregman distance $D_f^{\mathbf{x}^*}(\mathbf{x},\mathbf{y})$ between $\mathbf{x},\mathbf{y}\in\mathbb{R}^n$ with respect to $f$ and a subgradient $\mathbf{x}^*\in\partial f(\mathbf{x})$ is defined as 
	$$D_{f}^{\mathbf{x}^*}(\mathbf{x},\mathbf{y}):=f(\mathbf{y})-f(\mathbf{x})-\langle\mathbf{x}^*,\mathbf{y}-\mathbf{x}\rangle.$$
	\end{definition} 
If $f$ is differentiable, then we have $\{\nabla f(\mathbf{x})\}=\partial f(\mathbf{x})$. Note that, when $f(\mathbf{x})=\|\mathbf{x}\|^2$, $D_f(\mathbf{x},\mathbf{y})=\|\mathbf{x}-\mathbf{y}\|^2$, which is the standard Euclidean distance. 

Subsequently, we introduce an important property of the Bregman distance to prove the convergence of SSKM method. It can be immediately derived from the assumption of strong convexity.
\begin{lemma}[\cite{2016Linear}]
	Let $f:\mathbb{R}^n\rightarrow\mathbb{R}$ be $\alpha$-strongly convex. For any $\mathbf{x},\mathbf{y}\in\mathbb{R}^n$, $\mathbf{x}^*\in\partial f(\mathbf{x})$, and $\mathbf{y}^*\in\partial f(\mathbf{y})$, we have 
	$$\frac{\alpha}{2}\|\mathbf{x}-\mathbf{y}\|_2^2\leq D_{f}^{\mathbf{x}^*}(\mathbf{x},\mathbf{y})\leq\langle\mathbf{x}^*-\mathbf{y}^*,\mathbf{x}-\mathbf{y}\rangle\leq\|\mathbf{x}^*-\mathbf{y}^*\|_2\|\mathbf{x}-\mathbf{y}\|_2.$$
	Thus,
	$$D_{f}^{\mathbf{x}^*}(\mathbf{x},\mathbf{y})=0\Leftrightarrow\mathbf{x}=\mathbf{y}.$$
\end{lemma}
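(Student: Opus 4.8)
The plan is to establish the chain of three inequalities from left to right, each being a one-line consequence of a definition, and then deduce the claimed equivalence from the leftmost inequality together with $\alpha>0$. No deep machinery is needed; the whole argument is just careful bookkeeping of subgradient inequalities, and the only thing to watch is which point ($\mathbf{x}$ or $\mathbf{y}$) supplies the subgradient in each step and the resulting sign of $\mathbf{y}-\mathbf{x}$.

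First I would prove the lower bound $\tfrac{\alpha}{2}\|\mathbf{x}-\mathbf{y}\|_2^2\le D_f^{\mathbf{x}^*}(\mathbf{x},\mathbf{y})$. This is immediate from the definition of $\alpha$-strong convexity applied at $\mathbf{x}$ with subgradient $\mathbf{x}^*$: the inequality $f(\mathbf{y})\ge f(\mathbf{x})+\langle\mathbf{x}^*,\mathbf{y}-\mathbf{x}\rangle+\tfrac{\alpha}{2}\|\mathbf{x}-\mathbf{y}\|_2^2$ rearranges, using Definition \ref{Breg}, to exactly this bound.

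Next I would prove the middle inequality $D_f^{\mathbf{x}^*}(\mathbf{x},\mathbf{y})\le\langle\mathbf{x}^*-\mathbf{y}^*,\mathbf{x}-\mathbf{y}\rangle$. Here I invoke the subgradient inequality at the \emph{other} point: since $\mathbf{y}^*\in\partial f(\mathbf{y})$, we have $f(\mathbf{x})\ge f(\mathbf{y})+\langle\mathbf{y}^*,\mathbf{x}-\mathbf{y}\rangle$, hence $f(\mathbf{y})-f(\mathbf{x})\le\langle\mathbf{y}^*,\mathbf{y}-\mathbf{x}\rangle$. Substituting this into $D_f^{\mathbf{x}^*}(\mathbf{x},\mathbf{y})=f(\mathbf{y})-f(\mathbf{x})-\langle\mathbf{x}^*,\mathbf{y}-\mathbf{x}\rangle$ gives $D_f^{\mathbf{x}^*}(\mathbf{x},\mathbf{y})\le\langle\mathbf{y}^*-\mathbf{x}^*,\mathbf{y}-\mathbf{x}\rangle=\langle\mathbf{x}^*-\mathbf{y}^*,\mathbf{x}-\mathbf{y}\rangle$. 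The rightmost inequality is then just the Cauchy--Schwarz inequality applied to the pair $\mathbf{x}^*-\mathbf{y}^*$ and $\mathbf{x}-\mathbf{y}$.

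Finally, for the equivalence: if $\mathbf{x}=\mathbf{y}$ then $D_f^{\mathbf{x}^*}(\mathbf{x},\mathbf{y})=f(\mathbf{x})-f(\mathbf{x})-\langle\mathbf{x}^*,\mathbf{0}\rangle=0$; conversely, if $D_f^{\mathbf{x}^*}(\mathbf{x},\mathbf{y})=0$ then the already-proved lower bound forces $\tfrac{\alpha}{2}\|\mathbf{x}-\mathbf{y}\|_2^2\le 0$, and since $\alpha>0$ this yields $\mathbf{x}=\mathbf{y}$. I do not anticipate any genuine obstacle; the only place to be slightly careful is ensuring the inequality in the second step points the right way, which is why the subgradient there must be taken at $\mathbf{y}$ rather than at $\mathbf{x}$.
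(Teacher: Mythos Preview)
Your proof is correct. The paper itself does not give a detailed proof of this lemma; it merely cites \cite{2016Linear} and remarks that ``it can be immediately derived from the assumption of strong convexity,'' which is precisely what you have spelled out.
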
  


\subsection{Bregman projection}
\begin{definition}[\cite{lorenz2014the}]\label{Bregman Projection}
	Let $f:\mathbb{R}^n\rightarrow\mathbb{R}$ be strongly convex, and $\mathbf{C}\subset\mathbb{R}^n$ be a nonempty closed convex set. The Bregman projection of $\mathbf{x}$ onto $\mathbf{C}$ with respect to $f$ and $\mathbf{x}^*\in\partial f(\mathbf{x})$ is the unique point defined as $\Pi_{\mathbf{C}}^{\mathbf{x}^{*}}(\mathbf{x}) \in \mathbf{C}$ such that 
	$$D_f^{\mathbf{x}^*}(\mathbf{x},\Pi_{\mathbf{C}}^{\mathbf{x}^*}(\mathbf{x}))=\min_{\mathbf{y} \in \mathbf{C}}D_f^{\mathbf{x}^*}(\mathbf{x},\mathbf{y}).$$
	\end{definition} 
Note that, the Bregman projection can be regarded as a generalization of the traditional orthogonal projection. In fact, if $f(\mathbf{x})=\frac{1}{2}\|\mathbf{x}\|^2_2$, then  $\Pi_{\mathbf{C}}^{\mathbf{x}}(\mathbf{x})=\arg\min_{y\in\mathbf{C}}\frac{1}{2}\|\mathbf{x}-\mathbf{y}\|^2_2$. Lemma below characterizes the Bregman projection. 
\begin{lemma}[\cite{2016Linear}]\label{inequality}
	Let  $f:\mathbb{R}^n\rightarrow\mathbb{R}$ be strongly convex and $\mathbf{C}$ be a nonempty closed convex set. The point $\mathbf{z}\in\mathbf{C}$ is the Bregman projection of $\mathbf{x}$ onto $\mathbf{C}$ with respect to $f$ and $\mathbf{x}^*\in\partial f(\mathbf{x})$ iff there are some $\mathbf{z}^*$ such that one of the following equivalent conditions is satisfied
	\begin{eqnarray*}
	&\langle\mathbf{z}^*-\mathbf{x}^*,\mathbf{y}-\mathbf{z}\rangle\geq 0&,\forall\mathbf{y}\in\mathbf{C}\\
		&D_f^{\mathbf{z}^*}(\mathbf{z},\mathbf{y})\leq D_f^{\mathbf{x}^*}(\mathbf{x},\mathbf{y})-D_f^{\mathbf{x}^*}(\mathbf{x},\mathbf{z}),&
	\end{eqnarray*} 
Then, the point $\mathbf{z}^*$ is called the admissible subgradient for $\mathbf{z}=\Pi_{C}^{\mathbf{x}^*}(\mathbf{x}).$
\end{lemma}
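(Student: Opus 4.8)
The plan is to recognize the Bregman projection as the solution of a strongly convex minimization problem and then read off the two displayed conditions as its first-order optimality conditions. First I would fix $\mathbf{x}$ and $\mathbf{x}^*\in\partial f(\mathbf{x})$ and introduce $g(\mathbf{y}):=D_f^{\mathbf{x}^*}(\mathbf{x},\mathbf{y})=f(\mathbf{y})-f(\mathbf{x})-\langle\mathbf{x}^*,\mathbf{y}-\mathbf{x}\rangle$. Since $f$ is $\alpha$-strongly convex and the affine part of $g$ does not affect the modulus, $g$ is again $\alpha$-strongly convex; being finite-valued and coercive it attains its minimum over the nonempty closed convex set $\mathbf{C}$ at a unique point, which by Definition~\ref{Bregman Projection} is exactly $\mathbf{z}=\Pi_{\mathbf{C}}^{\mathbf{x}^*}(\mathbf{x})$. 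Thus the lemma reduces to characterizing the minimizer of $g$ over $\mathbf{C}$.

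Next I would show the two displayed conditions are equivalent to each other by a direct computation (the three-point identity for Bregman distances). Expanding the definition, the right-hand side of the second condition equals $f(\mathbf{y})-f(\mathbf{z})-\langle\mathbf{x}^*,\mathbf{y}-\mathbf{z}\rangle$, while its left-hand side equals $f(\mathbf{y})-f(\mathbf{z})-\langle\mathbf{z}^*,\mathbf{y}-\mathbf{z}\rangle$; subtracting, the inequality is seen to be nothing but $\langle\mathbf{z}^*-\mathbf{x}^*,\mathbf{y}-\mathbf{z}\rangle\ge 0$ for all $\mathbf{y}\in\mathbf{C}$, i.e.\ the first condition. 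It therefore suffices to prove that $\mathbf{z}\in\mathbf{C}$ minimizes $g$ over $\mathbf{C}$ if and only if there exists $\mathbf{z}^*\in\partial f(\mathbf{z})$ with $\langle\mathbf{z}^*-\mathbf{x}^*,\mathbf{y}-\mathbf{z}\rangle\ge 0$ for every $\mathbf{y}\in\mathbf{C}$. For sufficiency I would simply combine the subgradient inequality $f(\mathbf{y})\ge f(\mathbf{z})+\langle\mathbf{z}^*,\mathbf{y}-\mathbf{z}\rangle$ with the assumed inequality: for $\mathbf{y}\in\mathbf{C}$, $g(\mathbf{y})-g(\mathbf{z})=f(\mathbf{y})-f(\mathbf{z})-\langle\mathbf{x}^*,\mathbf{y}-\mathbf{z}\rangle\ge\langle\mathbf{z}^*-\mathbf{x}^*,\mathbf{y}-\mathbf{z}\rangle\ge 0$, so $\mathbf{z}$ is the minimizer. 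For necessity I would invoke the first-order optimality condition $0\in\partial(g+\iota_{\mathbf{C}})(\mathbf{z})$, where $\iota_{\mathbf{C}}$ is the indicator of $\mathbf{C}$; since $g$ is finite everywhere, the Moreau--Rockafellar sum rule yields $0\in\partial g(\mathbf{z})+N_{\mathbf{C}}(\mathbf{z})$, and because $\partial g(\mathbf{z})=\partial f(\mathbf{z})-\mathbf{x}^*$ this produces a $\mathbf{z}^*\in\partial f(\mathbf{z})$ with $\mathbf{x}^*-\mathbf{z}^*\in N_{\mathbf{C}}(\mathbf{z})$, i.e.\ $\langle\mathbf{z}^*-\mathbf{x}^*,\mathbf{y}-\mathbf{z}\rangle\ge 0$ for all $\mathbf{y}\in\mathbf{C}$; this $\mathbf{z}^*$ is then the admissible subgradient.

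I expect the only delicate point to be this last (necessity) step: passing from ``$\mathbf{z}$ is optimal'' to the existence of a \emph{single} subgradient $\mathbf{z}^*$ that works uniformly over all $\mathbf{y}\in\mathbf{C}$ requires the subdifferential sum rule (equivalently a normal-cone/separation argument), and one should also keep track of strong convexity to ensure the projection — and hence the statement itself — concerns a well-defined unique point. The mutual equivalence of the two conditions and the sufficiency direction are routine algebra, so the write-up would spend most of its effort making the optimality/sum-rule argument precise.
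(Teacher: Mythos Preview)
The paper does not supply its own proof of this lemma: it is quoted verbatim from \cite{2016Linear} and used as a black box, so there is nothing to compare against. Your proposal is correct and is the standard convex-analytic argument --- equivalence of the two displayed conditions via the three-point identity, sufficiency from the subgradient inequality, and necessity from $0\in\partial g(\mathbf{z})+N_{\mathbf{C}}(\mathbf{z})$ with the Moreau--Rockafellar sum rule --- and would serve perfectly well as a self-contained proof here.
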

\section{Sparse sampling Kaczmarz-Motzkin method}

In this section, SSKM method will be introduced to solve the augmented Basis pursuit model \cite{Lai,Zhang} 
below,
\begin{eqnarray}
\begin{aligned}
&\min _{\mathbf{x} \in \mathbb{R}^{n}} f(\mathbf{x})=\lambda\|\mathbf{x}\|_{1}+\frac{1}{2}\|\mathbf{x}\|_{2}^{2}\\
&\text {s.t. }\mathbf{A}\mathbf{x}=\mathbf{b},
\end{aligned}
\label{basispursuit1}
\end{eqnarray}
where $\lambda>0$ is some regularizer.
 In this study, we assume $\mathbf{b}\neq\bm{0}$, and it is in the $\text{Range}(\mathbf{A})$. Consequently, the solution of (\ref{basispursuit1}) is unique and nonzero.  
 
 Let $\mathbf{x}_k$ and its admissible subgradient $
 \mathbf{x}^*_k$ be given. The procedure of  SSKM method in each iteration consists of  two steps.
 \begin{enumerate}
 	\item[Step 1.] Choose an index $i_k$ according to the following distribution
 		\begin{eqnarray*}
 			&\tau_k\sim p_{\mathrm{k}}:\left(\begin{array}{c}
 				{[m]} \\
 				\beta_{k}
 			\end{array}\right) \rightarrow[0,1),&\\
 		&i_{k}=\arg \max _{i \in \tau_{k}}( \mathbf{a}_{i}^{\top} \mathbf{x}_{k}-b_{i})^2&,
 	\end{eqnarray*}
 where $\binom{[m]}{\beta_k}$ means sampling $\beta_k$ numbers from the index set $[m]:=\{1,2,\cdots, m\}$.

 	\item[Step 2.] Calculate the Bregman projection of $\mathbf{x}_{k}$ onto the $i_k$-th hyperplane $H(\mathbf{a}_{i_k},b_{i_k})=\{\mathbf{x}:\langle\mathbf{a}_{i_k},\mathbf{x}\rangle=b_{i_k}\}$ denoted by $\Pi^{\mathbf{x}^*_k}_{H(\mathbf{a}_{i_k},b_{i_k})}(\mathbf{x}_{k})$, and calculating its admissible subgradient.
 \end{enumerate}
\begin{algorithm}[!htb] 
	\renewcommand{\algorithmicrequire}{\textbf{Input:}}
	\renewcommand\algorithmicensure {\textbf{Output:} }
	\caption{The Sampling Sparse Kaczmarz-Motzkin method} 
	\label{A1} 
	\begin{algorithmic}[1] 
		\REQUIRE $\{\mathbf{x}_0=\mathbf{ x}_0^*\in\mathbb{R}^n,\mathbf{A}\in\mathbb{R}^{m\times n},\varepsilon,T\}$ ~~\\ 
		$\mathbf{x}_0$: the initial point.\\
		$\mathbf{x}_0$: the initial point of intermediate variable.\\
		$\mathbf{ A}$: the measurement matrix, whose rows are normalized.\\
		$\beta_k$: is the random sampling number.\\
		$\varepsilon$:~~the allowed error bound.\\
		$\lambda$: the parameter of the soft-thresholding operator.\\
		$T$:~~the allowed maximum iteration.
		\ENSURE ~~\\ 
		$\overline{\mathbf{x}}$: an estimation of the ground truth.\\
		\vskip 4mm
		\hrule
		\vskip 2mm
	\end{algorithmic}
	$\mathbf{Initialization:}$
	\begin{algorithmic}[1]
		\STATE $\mathbf{x}_0,\mathbf{x}_0^*$ are  $\bm{0}$.
	\end{algorithmic}
	$\mathbf{General~step}$
	\begin{algorithmic}[1]
		\STATE
		choose an index $i_k$ from the selection rule.
		\begin{eqnarray*}
			\begin{array}{c}
				\tau_{k}\sim p_k,~\text{and}~
				p_{\mathbf{x}}(\tau_k)=\frac{1}{m},\\
				i_{k}=\arg \max _{i \in \tau_{k}} (\mathbf{a}_{i}^{\top} \mathbf{x}_{k}-b_{i})^2.
			\end{array}
		\end{eqnarray*}
		\STATE
		$\mathbf{x}_{k+1}^{*}=\mathbf{x}_{k}^{*}-t_k \cdot\mathbf{a}_{i_{k}}.$
		\STATE
		Where $t_k=\left\langle\mathbf{a}_{i_{k}}, \mathbf{x}_{k}\right\rangle-\mathbf{b}_{i_{k}},$ which is called the inexact step or $t_{k}=\operatorname{argmin}_{t \in \mathbb{R}} f^{*}\left(\mathbf{x}_{k}^{*}-t \cdot \mathbf{a}_{i_{k}}\right)+t \cdot\mathbf{b}_{i_{k}}$ which is called the exact step.
		\STATE
		$\mathbf{x}_{k+1}=S_{\lambda}\left(\mathbf{x}_{k+1}^{*}\right),$ where $S_{\lambda}(\cdot)$ is the soft-thresholding operator which is defined as $$S_{\lambda}(x_i)=\max(|x_i|-\lambda,0)\cdot \textrm{sign}(x_i),i=1,\cdots,n.$$
		\IF{$\big|\big|\mathbf{ A}\mathbf{ x}_{k+1}-\mathbf{b}\big|\big|_2\leq\varepsilon$ \textbf{or} $k=T$}
		\STATE $\overline{\mathbf{x}}=\mathbf{x}_{k+1}$.\\
		\ENDIF
	\end{algorithmic}
\end{algorithm}

In the following, we will present some technical details that help us understand SSKM, along with some preliminary theoretical results, which will be used for convergence analysis.
\subsection{Sampling rule} 
The strategy to choose which $H(\mathbf{a}_i,b_i)$ to be projected onto is based on the rule of Sampling Kaczmarz-Motzkin method, which picks up the most violated item from one of the subsets with $\beta$ rows from $\mathbf{A}$. On the contrary to the randomized Kaczmarz  and Randomized sparse Kazmarz methods, which pick up rows with a fixed probability,  the probability that SSKM method utilized is flexible, and it is defined as 
\begin{eqnarray}\label{probability}
p_{\mathbf{x}}(\tau_k):=\frac{\|\mathbf{a}_{t(\tau_k,\mathbf{ x})}\|^2_2}{\sum_{\tau_k\in\binom{[m]}{\beta_k}}\|\mathbf{a}_{t(\tau_k,\mathbf{ x})}\|^2_2},
\end{eqnarray}
where $t(\tau_k,\mathbf{x})=\arg\max_{i\in\tau_k}(\mathbf{a}_i^T\mathbf{ x}-b_i)^2$. From (\ref{probability}), we can find the probability to choose the sub-row of $\mathbf{A}$ is not uniform, it depends on the norm of  $\mathbf{a}_{t(\tau_k, \mathbf{x})}$ which has the largest error among $(\mathbf{a}_i^T\mathbf{ x}-b_i)^2,i\in\tau_k$. If $\beta_k=1$, (\ref{probability}) is equivalent to the randomized Kaczmarz method.  If $\beta_k=m$, it is equivalent to picking up the most violated item from the whole rows of $\mathbf{ A}$. 

At the first glimpse, calculating (\ref{probability}) demands a large burden of computational cost. However, if $\mathbf{A}$ is normalized, (\ref{probability}) is equal to choosing the most violated item from random $\binom{[m]}{\beta_k}$ rows of $\mathbf{A}$. So, there is no need to find out all the subsets with $\beta_k$ rows of $\mathbf{A}$. As a result, the whole computational cost is low. 
 \subsection{Bregman projection procedure}
 The core of the second part is to calculate the Bregman projection. Lemma \ref{t1} below demonstrates how to calculate the Bregman projection of a given point. For completeness, we give a simple theoretical proof here.
 \begin{lemma}[\cite{lorenz2014the}]\label{t1}
 	Let $f:\mathbb{R}^n\rightarrow\mathbb{R}$ be $\alpha$-strongly convex, $\mathbf{A}\in\mathbb{R}^{m\times n}$,$\mathbf{b}\in\mathbb{R}^m$. Then, the Bregman projection of $\mathbf{x}\in\mathbb{R}^n$ onto the hyperplane $H(\mathbf{a}_i,b_i)$ with $\mathbf{a}_i\neq\bm{0}$ is 
 	\begin{eqnarray}
 	\mathbf{z}:=\Pi_{H(\mathbf{a}_i,b_i)}^{\mathbf{x}^*}(\mathbf{x})=\nabla f^{*}(\mathbf{x}^*-\hat{t}\mathbf{a}_i),
 	\end{eqnarray}
 	where $\hat{t}\in\mathbb{R}$, which is one of the solutions of 
 	$$\min_{t\in\mathbb{R}}f^*(\mathbf{x}^*-t\mathbf{a}_i)+tb_i.$$
 	Moreover, $\mathbf{z}^*:=\mathbf{x}^*-\hat{t}\mathbf{a}_i$ is an admissible subgradient for $\mathbf{z}$ and for any $\mathbf{y}\in H(\mathbf{a}_i,b_i)$, we have
 	\begin{eqnarray}\label{BIN}
 	D_f^{\mathbf{z}^*}(\mathbf{z},\mathbf{y})\leq D_f^{\mathbf{x}^*}(\mathbf{x},\mathbf{y})-\frac{\alpha}{2}\frac{(\langle\mathbf{a}_i,\mathbf{x}\rangle-b_i)^2}{\|\mathbf{a}_i\|^2_2}.
 	\end{eqnarray} 
 \end{lemma}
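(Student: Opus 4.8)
The plan is to recognize the Bregman projection problem as a strongly convex program over an affine set and to identify $\hat t$ as a Lagrange multiplier for that program. First I would rewrite the defining minimization: since
$D_f^{\mathbf{x}^*}(\mathbf{x},\mathbf{y})=\big(f(\mathbf{y})-\langle\mathbf{x}^*,\mathbf{y}\rangle\big)+\big(\langle\mathbf{x}^*,\mathbf{x}\rangle-f(\mathbf{x})\big)$
and the second bracket does not depend on $\mathbf{y}$, computing $\Pi_{H(\mathbf{a}_i,b_i)}^{\mathbf{x}^*}(\mathbf{x})$ is the same as solving $\min\{f(\mathbf{y})-\langle\mathbf{x}^*,\mathbf{y}\rangle:\langle\mathbf{a}_i,\mathbf{y}\rangle=b_i\}$. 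The objective is $\alpha$-strongly convex and the feasible set is a nonempty affine subspace (as $\mathbf{a}_i\neq\bm{0}$), so this problem has a unique minimizer $\mathbf{z}$, which by definition is the Bregman projection.

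Next I would dualize. Introducing a multiplier $t\in\mathbb{R}$ for the single affine constraint, the Lagrangian $L(\mathbf{y},t)=f(\mathbf{y})-\langle\mathbf{x}^*-t\mathbf{a}_i,\mathbf{y}\rangle-tb_i$ minimized over $\mathbf{y}$ equals $-f^*(\mathbf{x}^*-t\mathbf{a}_i)-tb_i$, so the dual problem is exactly $-\min_{t\in\mathbb{R}}\big(f^*(\mathbf{x}^*-t\mathbf{a}_i)+tb_i\big)$. Because the constraint is affine, strong duality holds and a primal–dual optimal pair $(\mathbf{z},\hat t)$ exists; the KKT stationarity condition is $\mathbf{x}^*-\hat t\mathbf{a}_i\in\partial f(\mathbf{z})$, equivalently $\mathbf{z}=\nabla f^*(\mathbf{x}^*-\hat t\mathbf{a}_i)$ (using that $f^*$ is differentiable by the cited conjugacy theorem), while primal feasibility gives $\langle\mathbf{a}_i,\mathbf{z}\rangle=b_i$. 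Conversely, differentiating $\phi(t):=f^*(\mathbf{x}^*-t\mathbf{a}_i)+tb_i$ shows $\phi'(\hat t)=-\langle\nabla f^*(\mathbf{x}^*-\hat t\mathbf{a}_i),\mathbf{a}_i\rangle+b_i=-\langle\mathbf{z},\mathbf{a}_i\rangle+b_i=0$, so any minimizer of $\phi$ produces, through $\nabla f^*$, a feasible point whose dual value matches the primal optimum; by uniqueness of $\mathbf{z}$ this point equals the Bregman projection. This establishes the asserted formula.

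It remains to handle the admissible subgradient and inequality \eqref{BIN}. Put $\mathbf{z}^*:=\mathbf{x}^*-\hat t\mathbf{a}_i\in\partial f(\mathbf{z})$. For any $\mathbf{y}\in H(\mathbf{a}_i,b_i)$ we get $\langle\mathbf{z}^*-\mathbf{x}^*,\mathbf{y}-\mathbf{z}\rangle=-\hat t\langle\mathbf{a}_i,\mathbf{y}-\mathbf{z}\rangle=-\hat t(b_i-b_i)=0\geq0$, so by Lemma~\ref{inequality} the point $\mathbf{z}^*$ is an admissible subgradient for $\mathbf{z}=\Pi_{H(\mathbf{a}_i,b_i)}^{\mathbf{x}^*}(\mathbf{x})$ and $D_f^{\mathbf{z}^*}(\mathbf{z},\mathbf{y})\leq D_f^{\mathbf{x}^*}(\mathbf{x},\mathbf{y})-D_f^{\mathbf{x}^*}(\mathbf{x},\mathbf{z})$. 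Finally, the lower Bregman bound coming from $\alpha$-strong convexity gives $D_f^{\mathbf{x}^*}(\mathbf{x},\mathbf{z})\geq\frac{\alpha}{2}\|\mathbf{x}-\mathbf{z}\|_2^2$, and since $\langle\mathbf{a}_i,\mathbf{z}\rangle=b_i$, Cauchy–Schwarz yields $\|\mathbf{x}-\mathbf{z}\|_2^2\geq(\langle\mathbf{a}_i,\mathbf{x}\rangle-b_i)^2/\|\mathbf{a}_i\|_2^2$; chaining the three estimates gives \eqref{BIN}.

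The main obstacle is the duality step: one must make sure the one-dimensional problem $\min_t\phi(t)$ is genuinely attained and that its minimizers reconstruct the projection. Arguing coercivity of $\phi$ directly is awkward, because $f^*$ need not be coercive along a line; the clean route is the one above — obtain $\hat t$ as the KKT multiplier of the primal problem, whose minimizer exists thanks to the coercivity supplied by strong convexity, and then simply read off the stationarity and feasibility relations. Everything after that is routine bookkeeping with Lemma~\ref{inequality} and the quadratic lower bound on the Bregman distance.
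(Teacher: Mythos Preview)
Your proof is correct and follows essentially the same route as the paper: both obtain $\mathbf{x}^*-\hat t\,\mathbf{a}_i\in\partial f(\mathbf{z})$ from the first-order optimality condition of the (affinely constrained) primal problem, read off the scalar problem for $\hat t$ from the feasibility relation $\langle\mathbf{a}_i,\nabla f^*(\mathbf{x}^*-\hat t\mathbf{a}_i)\rangle=b_i$, and then combine Lemma~\ref{inequality} with the $\alpha$-strong-convexity lower bound and the distance-to-hyperplane estimate. The only cosmetic difference is that you phrase the optimality step via Lagrangian/KKT language (and are a bit more careful about existence of $\hat t$), whereas the paper phrases it via the normal cone $N_{H(\mathbf{a}_i,b_i)}(\mathbf{z})=\{t\mathbf{a}_i:t\in\mathbb{R}\}$.
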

 \begin{proof}
 	Recall the definition of the hyperplane $H(\mathbf{ a}_i,b_i)=\{\mathbf{x}:\mathbf{a}_i^{T}\mathbf{x}=b_i\}$. If $\mathbf{z}:=\Pi^{\mathbf{x}^*}_{H(\mathbf{a}_i,b_i)}(\mathbf{x})$, then $\mathbf{a}_i^{T}\mathbf{z}=b_i$. As a result, $H(\mathbf{ a}_i,b_i)-\{\mathbf{z}\}=\{\mathbf{y}:\mathbf{a}_i^{T}\mathbf{y}=0\}$.
 	Thus, the normal cone $N_{H(\mathbf{a}_i,b_i)}(\mathbf{z})=\{t\mathbf{a}_i,t\in\mathbb{R}\}$. 
 	Because
 	\begin{eqnarray*}
 	\mathbf{z}&=&\min_{\mathbf{y}\in\mathbf{C}}D_f^{\mathbf{x}^*}(\mathbf{x},\mathbf{y})=\min_{\mathbf{y}\in\mathbf{C}}f(\mathbf{y})-\langle\mathbf{x}^*,\mathbf{y}-\mathbf{x}\rangle\\\nonumber&=&\min_{\mathbf{y}\in\mathbb{R}^n} f(\mathbf{y})-\langle\mathbf{x}^*,\mathbf{y}-\mathbf{x}\rangle+\delta_{\mathbf{C}}(\mathbf{y}),
 \end{eqnarray*} 
where $\delta_{\mathbf{C}}(\cdot)$ is the indicator function. Thus, we can infer that $(\mathbf{x}^*-\mathbf{z}^*)\in N_{\mathbf{C}}(\mathbf{x})$ by the first order optimality condition. As a result, there exists
 	$\hat{t}\in\mathbb{R}$, such that $\mathbf{x}^*-\hat{t}\mathbf{a}_i\in\partial f(\mathbf{z})$. Then $\nabla f^*(\mathbf{x}^*-\hat{t}\mathbf{a}_i)=\mathbf{z}$, which is the Bregman projection of $\mathbf{x}$. Remaining is to calculate $\hat{t}$, 	which is called the exact step. Note that $\mathbf{a}_i^T\nabla f^*(\mathbf
 	{x}^*-\hat{t}\mathbf{a}_i)=b_i.$ By the first order optimality condition, we can formulate an optimization problem below
 	$$\hat{t}\in \arg\min_{t\in\mathbb{R}}f^*(\mathbf{x}^*-t\mathbf{a}_i)+tb_i.$$
 	
 \noindent 
 Last is to prove the inequality. Recalling Lemma \ref{inequality}, we have
 	$$D_f^{\mathbf{z}^*}(\mathbf{z},\mathbf{y})\leq D_f^{\mathbf{x}^*}(\mathbf{x},\mathbf{y})-D_f^{\mathbf{x}^*}(\mathbf{x},\mathbf{z}).$$
 	Then, by the strong convex property of $f$, we derive that
 	$$D_f^{\mathbf{x}^*}(\mathbf{x},\mathbf{z})\geq\frac{\alpha}{2}\|\mathbf{x}-\mathbf{z}\|^2\geq\frac{\alpha}{2}\|\mathbf{x}-\mathbb{P}_{H(\mathbf{a}_i,b_i)}(\mathbf{x})\|^2=\frac{\alpha}{2}\frac{(\langle\mathbf{a}_i,\mathbf{x}
 		\rangle-b_i)^2}{\|\mathbf{a}_i\|^2_2},$$
 	where $\mathbb{P}_{H(\mathbf{a}_i,b_i)}(\mathbf{x})$ is the standard orthogonal projection of $\mathbf{x}$ onto $H(\mathbf{a}_i,b_i)$. Then, we complete the proof.
 \end{proof}

\section{Proof of linear convergence of the SSKM method}


First, we characterize the error bound between $D_f^{\mathbf{x}^*}(\mathbf{x},\hat{\mathbf{x}})$ and $\|\mathbf{ A}\mathbf{x}-\mathbf{ b}\|^2$.
\begin{lemma}[ \cite{2016Linear}]\label{errorbound}
	Let $\tilde{\sigma}_{\min}(\mathbf{ A})$ and $|\hat{\mathbf{x}}|_{\min}$ be defined as before. When $\lambda>0$, then for any $\mathbf{x}\in\mathbb{R}^n$ with $\partial f(\mathbf{x})\cap\mathcal{R}(\mathbf{ A}^{T})\neq\emptyset$ and for all $\mathbf{x}^*=\mathbf{A}^{T}\mathbf{y}\in\partial f(\mathbf{x})\cap\mathcal{R}(\mathbf{ A}^{T})$, 
	$\mathbf{y}\in\mathbb{R}^m$, we have
	\begin{eqnarray}\label{error} 	D_f^{\mathbf{x}^*}(\mathbf{x},\hat{\mathbf{x}})\leq\frac{1}{\tilde{\sigma}_{\min}^2(\mathbf{A})}\cdot\frac{|\hat{\mathbf{ x}}|_{\min}+2\lambda}{|\hat{\mathbf{ x}}|_{\min}}\|\mathbf{ A}\mathbf{x}-\mathbf{b}\|^2_2.
	\end{eqnarray}
\end{lemma}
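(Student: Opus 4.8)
The plan is to obtain \eqref{error} as the composition of two simpler estimates. The first, a ``singular-value'' estimate, turns $D_f^{\mathbf{x}^*}(\mathbf{x},\hat{\mathbf{x}})$ into a product of the residual $\|\mathbf{A}\mathbf{x}-\mathbf{b}\|_2$ with the ``dual gap'' $\|\mathbf{x}^*-\hat{\mathbf{x}}^*\|_2$; the second, a structural estimate, bounds that dual gap in terms of the residual with the constant $C:=\frac{|\hat{\mathbf{x}}|_{\min}+2\lambda}{|\hat{\mathbf{x}}|_{\min}}$, and this is where the concrete form $f(\mathbf{x})=\lambda\|\mathbf{x}\|_1+\frac12\|\mathbf{x}\|_2^2$ is exploited. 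Substituting the second into the first and cancelling one factor of $\|\mathbf{x}^*-\hat{\mathbf{x}}^*\|_2$ yields the claim. Throughout I take $\hat{\mathbf{x}}^*\in\mathcal{R}(\mathbf{A}^T)$ (the admissible subgradient produced by the algorithm lies there), so that $\mathbf{x}^*-\hat{\mathbf{x}}^*\in\mathcal{R}(\mathbf{A}^T)$, and I use $\mathbf{A}\hat{\mathbf{x}}=\mathbf{b}$.

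For the singular-value estimate: since $\hat{\mathbf{x}}^*\in\partial f(\hat{\mathbf{x}})$, the Bregman-distance inequality recalled in Section~2 gives $D_f^{\mathbf{x}^*}(\mathbf{x},\hat{\mathbf{x}})\le\langle\mathbf{x}^*-\hat{\mathbf{x}}^*,\mathbf{x}-\hat{\mathbf{x}}\rangle$. Write $\mathbf{x}^*-\hat{\mathbf{x}}^*=\mathbf{A}^T\mathbf{v}$ with $\mathbf{v}\in\mathcal{R}(\mathbf{A})$ the minimum-norm preimage; then $\|\mathbf{x}^*-\hat{\mathbf{x}}^*\|_2\ge\tilde{\sigma}_{\min}(\mathbf{A})\|\mathbf{v}\|_2$, while $\langle\mathbf{x}^*-\hat{\mathbf{x}}^*,\mathbf{x}-\hat{\mathbf{x}}\rangle=\langle\mathbf{v},\mathbf{A}(\mathbf{x}-\hat{\mathbf{x}})\rangle=\langle\mathbf{v},\mathbf{A}\mathbf{x}-\mathbf{b}\rangle\le\|\mathbf{v}\|_2\|\mathbf{A}\mathbf{x}-\mathbf{b}\|_2$. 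Hence both $D_f^{\mathbf{x}^*}(\mathbf{x},\hat{\mathbf{x}})$ and $\langle\mathbf{x}^*-\hat{\mathbf{x}}^*,\mathbf{x}-\hat{\mathbf{x}}\rangle$ are at most $\frac{1}{\tilde{\sigma}_{\min}(\mathbf{A})}\|\mathbf{x}^*-\hat{\mathbf{x}}^*\|_2\,\|\mathbf{A}\mathbf{x}-\mathbf{b}\|_2$.

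For the structural estimate I will use that, $f$ being strongly convex, $\mathbf{x}^*\in\partial f(\mathbf{x})$ is equivalent to $\mathbf{x}=\nabla f^*(\mathbf{x}^*)$, and that for this particular $f$ the map $\nabla f^*$ is the coordinatewise soft-thresholding $S_\lambda$, so $\mathbf{x}=S_\lambda(\mathbf{x}^*)$ and $\hat{\mathbf{x}}=S_\lambda(\hat{\mathbf{x}}^*)$. The target is $\|\mathbf{x}^*-\hat{\mathbf{x}}^*\|_2^2\le C\,\langle\mathbf{x}^*-\hat{\mathbf{x}}^*,\mathbf{x}-\hat{\mathbf{x}}\rangle$. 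For a coordinate $i$ in the support of $\hat{\mathbf{x}}$ one has $|\hat{x}_i^*|=|\hat{x}_i|+\lambda\ge|\hat{\mathbf{x}}|_{\min}+\lambda$, and a short case analysis over the three regimes of $S_\lambda$ applied to $x_i^*$ gives the coordinatewise bound $(x_i^*-\hat{x}_i^*)^2\le C\,(x_i^*-\hat{x}_i^*)(x_i-\hat{x}_i)$, the constant being worst precisely when $x_i=0$ and $|\hat{x}_i|=|\hat{\mathbf{x}}|_{\min}$. For a coordinate $i$ outside the support of $\hat{\mathbf{x}}$ the soft-threshold is flat at $\hat{x}_i=0$, and there the term-by-term bound with constant $C$ can fail; those coordinates will have to be absorbed globally, using that $\mathbf{x}^*-\hat{\mathbf{x}}^*\in\mathcal{R}(\mathbf{A}^T)$ ties them to the support coordinates, which carry slack in the inequality just proved. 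This off-support bookkeeping is the step I expect to be the main obstacle.

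Finally, feeding the structural estimate into the singular-value bound for $\langle\mathbf{x}^*-\hat{\mathbf{x}}^*,\mathbf{x}-\hat{\mathbf{x}}\rangle$ gives $\|\mathbf{x}^*-\hat{\mathbf{x}}^*\|_2^2\le\frac{C}{\tilde{\sigma}_{\min}(\mathbf{A})}\|\mathbf{x}^*-\hat{\mathbf{x}}^*\|_2\,\|\mathbf{A}\mathbf{x}-\mathbf{b}\|_2$, hence $\|\mathbf{x}^*-\hat{\mathbf{x}}^*\|_2\le\frac{C}{\tilde{\sigma}_{\min}(\mathbf{A})}\|\mathbf{A}\mathbf{x}-\mathbf{b}\|_2$; substituting this into the singular-value bound for $D_f^{\mathbf{x}^*}(\mathbf{x},\hat{\mathbf{x}})$ produces $D_f^{\mathbf{x}^*}(\mathbf{x},\hat{\mathbf{x}})\le\frac{C}{\tilde{\sigma}_{\min}^2(\mathbf{A})}\|\mathbf{A}\mathbf{x}-\mathbf{b}\|_2^2$, which is \eqref{error}. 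The degenerate case $\langle\mathbf{x}^*-\hat{\mathbf{x}}^*,\mathbf{x}-\hat{\mathbf{x}}\rangle=0$ is immediate, since then the singular-value estimate already forces $D_f^{\mathbf{x}^*}(\mathbf{x},\hat{\mathbf{x}})=0$.
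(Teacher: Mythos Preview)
The paper does not actually prove this lemma; it is simply quoted from \cite{2016Linear}, so there is no in-paper argument to compare against. That said, let me assess your proposal on its own merits.

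Your singular-value estimate is fine. The real issue is the structural estimate
\[
\|\mathbf{x}^*-\hat{\mathbf{x}}^*\|_2^2\ \le\ C\,\langle\mathbf{x}^*-\hat{\mathbf{x}}^*,\mathbf{x}-\hat{\mathbf{x}}\rangle,
\qquad C=\tfrac{|\hat{\mathbf{x}}|_{\min}+2\lambda}{|\hat{\mathbf{x}}|_{\min}},
\]
for a \emph{fixed} $\hat{\mathbf{x}}^*\in\partial f(\hat{\mathbf{x}})\cap\mathcal{R}(\mathbf{A}^T)$. This inequality is simply false in general. Your own case analysis shows it holds coordinatewise on the support of $\hat{\mathbf{x}}$, but on an off-support coordinate $i$ with $x_i=\hat{x}_i=0$ the right-hand side contributes $0$ while $(x_i^*-\hat{x}_i^*)^2$ can be as large as $4\lambda^2$; and if $\hat{x}_i=0$ but $x_i\neq 0$, the ratio $|x_i^*-\hat{x}_i^*|/|x_i-\hat{x}_i|$ can be $(|x_i|+2\lambda)/|x_i|$, which is unbounded as $|x_i|\to 0$. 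Your proposed remedy---that the range constraint $\mathbf{x}^*-\hat{\mathbf{x}}^*\in\mathcal{R}(\mathbf{A}^T)$ ``ties'' off-support coordinates to on-support slack---does not work: nothing prevents the support terms from being tight (e.g.\ $x_i=\hat{x}_i$ there) while the off-support terms are large, and the range constraint by itself gives no quantitative coupling with the needed constant $C$.

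The argument in \cite{2016Linear} avoids this trap by \emph{not} fixing $\hat{\mathbf{x}}^*$ in advance. The set of admissible dual certificates $\{\hat{\mathbf{y}}:\mathbf{A}^T\hat{\mathbf{y}}\in\partial f(\hat{\mathbf{x}})\}$ is a polyhedron (equalities $(\mathbf{A}^T\hat{\mathbf{y}})_i=\hat{x}_i+\lambda\,\mathrm{sign}(\hat{x}_i)$ on the support, box constraints $|(\mathbf{A}^T\hat{\mathbf{y}})_i|\le\lambda$ off it), and one chooses $\hat{\mathbf{y}}$ depending on $\mathbf{y}$, essentially the nearest feasible point. A Hoffman-type error bound for this polyhedron, together with the support-coordinate analysis you already carried out, is what produces the explicit constant $C/\tilde{\sigma}_{\min}^2(\mathbf{A})$. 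So the missing idea is not ``global absorption'' but rather the freedom to pick $\hat{\mathbf{x}}^*$ adapted to $\mathbf{x}^*$, and a polyhedral error bound to control the distance after that choice.
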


Remark: when $\lambda=0$, and $\mathbf{A}$ is a full column rank matrix, by the strong convexity of $f$, we can immediately obtain that
\begin{eqnarray}\label{eq}
D_f^{\mathbf{x}^*}(\mathbf{x},\hat{\mathbf{x}})\leq\frac{1}{2\sigma_{\min}^2(\mathbf{A})}\|\mathbf{ A}\mathbf{x}-\mathbf{b}\|_2^2.
\end{eqnarray} 
In the following, we present our results for the noiseless and noisy cases respectively.
\begin{theorem}[Noiseless case]\label{main}
	Let $$\gamma_k:=\frac{\sum_{\tau_k\in\binom{[m]}{\beta_k}}\|\mathbf{A}_{\tau_k}\mathbf{x}_{k}-\mathbf{b}_{\tau_k}\|^2_2}{\sum_{\tau_k\in\binom{[m]}{\beta_k}}\|\mathbf{A}_{\tau_k}\mathbf{x}_{k}-\mathbf{b}_{\tau_k}\|^2_{\infty}}\leq\beta_k,$$
	and
	$$q_k:= \left\{
	\begin{aligned}
	&(1-\frac{\beta_k\tilde{\sigma}_{\min}^2(\mathbf{A})}{2\gamma_km}\cdot\frac{|\hat{\mathbf{ x}}|_{\min}}{|\hat{\mathbf{ x}}|_{\min}+2\lambda}),\lambda>0&\\
	&(1-\frac{\beta_k\sigma_{\min}^2(\mathbf{A})}{\gamma_km}),\lambda=0&
	\end{aligned}
	\right..$$
	
	\noindent
	The sequence $\{\mathbf{x}_k\}$ generated by the SSKM method in Algorithm \ref{A1} converges linearly in expectation to the unique solution $\hat{\mathbf{x}}$ of (\ref{basispursuit1}) in the sense that
	\begin{eqnarray}\label{a1}
		\mathbb{E}[D_f^{\mathbf{x}^*_{k+1}}(\mathbf{x}_{k+1},\hat{\mathbf{x}})]\leq q_k\mathbb{E}[ D_f^{\mathbf{x}_k^*}(\mathbf{x}_k,\hat{\mathbf{x}})].
	\end{eqnarray}	
	\noindent Furthermore, we have
	\begin{eqnarray}\label{A2}
		\mathbb{E}[\|\mathbf{ x}_{k+1}-\hat{\mathbf{ x}}\|_2]\leq\prod_{i=0}^{k} q_k^{\frac{1}{2}}\sqrt{2\lambda\|\hat{\mathbf{x}}\|^2_1+\|\hat{\mathbf{x}}\|_2^2}.
	\end{eqnarray}
\end{theorem}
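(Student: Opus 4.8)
The plan is to combine the one-step Bregman-projection inequality of Lemma~\ref{t1} with the error bound of Lemma~\ref{errorbound}, after averaging over the random block $\tau_k$. Observe first that $f(\mathbf{x})=\lambda\|\mathbf{x}\|_1+\tfrac12\|\mathbf{x}\|_2^2$ is $1$-strongly convex and that the unique solution $\hat{\mathbf{x}}$ of \eqref{basispursuit1} satisfies $\mathbf{a}_i^{\top}\hat{\mathbf{x}}=b_i$ for every $i$, so $\hat{\mathbf{x}}\in H(\mathbf{a}_{i_k},b_{i_k})$ for all $k$. Since $\mathbf{x}_{k+1}$ is the Bregman projection of $\mathbf{x}_k$ onto $H(\mathbf{a}_{i_k},b_{i_k})$ with admissible subgradient $\mathbf{x}_{k+1}^*=\mathbf{x}_k^*-t_k\mathbf{a}_{i_k}$, inequality \eqref{BIN} with $\mathbf{y}=\hat{\mathbf{x}}$, $\alpha=1$, and $\|\mathbf{a}_{i_k}\|_2=1$ (the rows are normalized) gives the \emph{deterministic} descent estimate
\[
D_f^{\mathbf{x}_{k+1}^*}(\mathbf{x}_{k+1},\hat{\mathbf{x}})\le D_f^{\mathbf{x}_k^*}(\mathbf{x}_k,\hat{\mathbf{x}})-\tfrac12\bigl(\mathbf{a}_{i_k}^{\top}\mathbf{x}_k-b_{i_k}\bigr)^2 .
\]
An induction starting from $\mathbf{x}_0^*=\bm 0$ shows $\mathbf{x}_k^*\in\partial f(\mathbf{x}_k)\cap\mathcal R(\mathbf{A}^{\top})$ for every $k$ (each $\mathbf{a}_{i_k}$ is a column of $\mathbf{A}^{\top}$), so Lemma~\ref{errorbound} applies along the iteration.

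Next I would take the conditional expectation over $\tau_k$ given $\mathbf{x}_k$. Because the rows are normalized, the weight in \eqref{probability} is uniform over the $\binom{m}{\beta_k}$ index sets, hence
\[
\mathbb{E}\bigl[(\mathbf{a}_{i_k}^{\top}\mathbf{x}_k-b_{i_k})^2\,\big|\,\mathbf{x}_k\bigr]=\frac{1}{\binom{m}{\beta_k}}\sum_{\tau_k\in\binom{[m]}{\beta_k}}\|\mathbf{A}_{\tau_k}\mathbf{x}_k-\mathbf{b}_{\tau_k}\|_\infty^2=\frac{1}{\gamma_k\binom{m}{\beta_k}}\sum_{\tau_k\in\binom{[m]}{\beta_k}}\|\mathbf{A}_{\tau_k}\mathbf{x}_k-\mathbf{b}_{\tau_k}\|_2^2 ,
\]
the last equality being the definition of $\gamma_k$. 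Expanding the $\ell_2$-norms and using that every index $i\in[m]$ appears in exactly $\binom{m-1}{\beta_k-1}$ of the subsets gives $\sum_{\tau_k}\|\mathbf{A}_{\tau_k}\mathbf{x}_k-\mathbf{b}_{\tau_k}\|_2^2=\binom{m-1}{\beta_k-1}\|\mathbf{A}\mathbf{x}_k-\mathbf{b}\|_2^2$, and since $\binom{m-1}{\beta_k-1}/\binom{m}{\beta_k}=\beta_k/m$ we obtain $\mathbb{E}[(\mathbf{a}_{i_k}^{\top}\mathbf{x}_k-b_{i_k})^2\mid\mathbf{x}_k]=\frac{\beta_k}{\gamma_k m}\|\mathbf{A}\mathbf{x}_k-\mathbf{b}\|_2^2$.

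Substituting this into the descent estimate and bounding $\|\mathbf{A}\mathbf{x}_k-\mathbf{b}\|_2^2$ from below by Lemma~\ref{errorbound} when $\lambda>0$ (respectively by \eqref{eq} when $\lambda=0$) yields $\mathbb{E}[D_f^{\mathbf{x}_{k+1}^*}(\mathbf{x}_{k+1},\hat{\mathbf{x}})\mid\mathbf{x}_k]\le q_k D_f^{\mathbf{x}_k^*}(\mathbf{x}_k,\hat{\mathbf{x}})$; taking total expectation and applying the tower property gives \eqref{a1}. For \eqref{A2}, I would iterate \eqref{a1} down to $k=0$, evaluate $D_f^{\mathbf{x}_0^*}(\mathbf{x}_0,\hat{\mathbf{x}})=f(\hat{\mathbf{x}})=\lambda\|\hat{\mathbf{x}}\|_1+\tfrac12\|\hat{\mathbf{x}}\|_2^2$ using $\mathbf{x}_0=\mathbf{x}_0^*=\bm 0$, then combine the $1$-strong convexity bound $\tfrac12\|\mathbf{x}_{k+1}-\hat{\mathbf{x}}\|_2^2\le D_f^{\mathbf{x}_{k+1}^*}(\mathbf{x}_{k+1},\hat{\mathbf{x}})$ with Jensen's inequality $\mathbb{E}\|\mathbf{x}_{k+1}-\hat{\mathbf{x}}\|_2\le(\mathbb{E}\|\mathbf{x}_{k+1}-\hat{\mathbf{x}}\|_2^2)^{1/2}$.

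The main obstacle is the averaging step and the bookkeeping around $\gamma_k$: since $\gamma_k$ depends on $\mathbf{x}_k$ it is itself random, so \eqref{a1} must be read with $q_k$ kept inside the expectation, or with $\gamma_k$ replaced by its crude bound $\beta_k$ (which collapses to a deterministic, SRK-type rate). The combinatorial identity for $\sum_{\tau_k}\|\mathbf{A}_{\tau_k}\mathbf{x}_k-\mathbf{b}_{\tau_k}\|_2^2$ and the $\infty$-versus-$2$-norm comparison encoded in $\gamma_k$ are the only genuinely technical computations; everything else is assembly of Lemmas~\ref{t1} and~\ref{errorbound} together with the strong convexity of $f$.
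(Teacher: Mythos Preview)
Your proof is correct and follows essentially the same route as the paper: apply the one-step descent inequality \eqref{BIN} with $\mathbf{y}=\hat{\mathbf{x}}$, average over $\tau_k$, use the definition of $\gamma_k$ together with the combinatorial identity $\binom{m-1}{\beta_k-1}/\binom{m}{\beta_k}=\beta_k/m$ to obtain $\mathbb{E}[(\mathbf{a}_{i_k}^{\top}\mathbf{x}_k-b_{i_k})^2\mid\mathbf{x}_k]=\tfrac{\beta_k}{\gamma_k m}\|\mathbf{A}\mathbf{x}_k-\mathbf{b}\|_2^2$, and then invoke Lemma~\ref{errorbound}. The only small addendum is that your descent step assumes $\mathbf{x}_{k+1}$ is the exact Bregman projection, whereas Algorithm~\ref{A1} also permits the inexact step $t_k=\langle\mathbf{a}_{i_k},\mathbf{x}_k\rangle-b_{i_k}$; the paper handles this by noting that inequality \eqref{backbone} still holds in that case via Theorem~2.8 of \cite{lorenz2014the}.
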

Remark:{ \textit{The convergence rate of SSKM method depends on the contraction factors $q_k$. Note that $1\leq\gamma_k\leq\beta_k$. When $(\mathbf{a}^{T}_i\mathbf{x}_k-b_i)^2, i=1,\cdots,m$ are equal,  the upper bound of $\gamma_k$ by $\beta_k$ can be achieved. In that case, SSKM method obtains the slowest convergence rate. On the contrary, the lower bound $\gamma_k=1$ can be achieved when only one of the residuals $|\mathbf{a}^{T}_i\mathbf{x}_k-b_i|$ is nonzero. Then, SSKM method converges as fast as 
		$$q_k= \left\{
		\begin{aligned}
		&(1-\frac{\tilde{\sigma}_{\min}^2(\mathbf{A})}{2}\cdot\frac{|\hat{\mathbf{ x}}|_{\min}}{|\hat{\mathbf{ x}}|_{\min}+2\lambda}),\lambda>0&\\
		&(1-\sigma^2_{\min}(\mathbf{A})),\lambda=0&
		\end{aligned}
		\right..$$ }}
\textit{The relationship between $\gamma_k$ and $\beta_k$ can be refined for different $\mathbf{A}$. In \cite{haddock2019greed}, it demonstrates that when $\mathbf{A}$ are drawn i.i.d from a standard Gaussian distribution, $\gamma_k=\mathcal{O}(n\beta_k/log(\beta_k))$.}

Now, let us finish the proof of Theorem \ref{main}.

\begin{proof}
By using (\ref{BIN}) with $\mathbf{x}_{k+1}=\mathbf{z}$, $\mathbf{x}_k=\mathbf{x}$ and $\mathbf{y}=\hat{\mathbf{x}}$ in Lemma \ref{t1}, inequality can be reformulated as below
\begin{eqnarray}\label{backbone}
	D_{f} ^{\mathbf{x}_{k+1}^*}(\mathbf{x}_{k+1},\hat{\mathbf{x}})
	&\leq& D_{f} ^{\mathbf{x}_{k}^*}(\mathbf{x}_{k},\hat{\mathbf{x}})-\frac{1}{2}\cdot\frac{(\mathbf{a}_{i(\tau_k,\mathbf{x}_k)}\mathbf{x}_k-\mathbf{ b}_{i_k})^2}{\|\mathbf{a}_{i(\tau_k,\mathbf{x}_k)}\|^2_2} \\
	&=& D_{f} ^{\mathbf{x}_{k}^*}(\mathbf{x}_{k},\hat{\mathbf{x}})-\frac{1}{2}\cdot\frac{\|\mathbf{A}_{\tau_k}\mathbf{x}_k-\mathbf{ b}_{\tau_k}\|^2_{\infty}}{\|\mathbf{a}_{i(\tau_k,\mathbf{x}_k)}\|^2_2}\nonumber. 
\end{eqnarray}
At the same time, (\ref{backbone}) is also held for the inexact step by Theorem 2.8 in \cite{lorenz2014the}. Using the sampling rule of the Kaczmarz-Motzkin method, and treating $\tau_k$ as the random variable, we derive that
\begin{eqnarray*}
&\mathbb{E}[D_{f}^{\hat{\mathbf{x}_{k+1}}}&(\mathbf{x}_{k+1},\hat{\mathbf{x}})|\tau_{k-1},\cdots,\tau_1,\tau_0]\\\nonumber
&\leq& D_f^{\mathbf{x}_k^*}(\mathbf{x}_k,\hat{\mathbf{x}})-\mathbb{E}_{\tau_k}(\frac{1}{2}\cdot\frac{\|\mathbf{A}_{\tau_k}\mathbf{x}_k-\mathbf{ b}_{\tau_k}\|^2_{\infty}}{\|\mathbf{a}_{i(\tau_k,\mathbf{x}_k)}\|^2_2})\\
&=& D_{f} ^{\mathbf{x}_{k}^*}(\mathbf{x}_{k},\hat{\mathbf{x}})-\sum_{\tau_k\in\binom{[m]}{\beta_k}}^{}p_{\mathbf{x}_{k}}(\tau_k)\frac{1}{2}\frac{\|\mathbf{A}_{\tau_k}\mathbf{x}_k-\mathbf{ b}_{\tau_k}\|^2_{\infty}}{\|\mathbf{a}_{i(\tau_k,\mathbf{x}_k)}\|^2_2}\\
 &=&D_{f} ^{\mathbf{x}_{k}^*}(\mathbf{x}_{k},\hat{\mathbf{x}})-\sum_{\tau_k\in\binom{[m]}{\beta_k}}^{}\frac{\|\mathbf{ a}_{i(\tau_k,\mathbf{x}_k)}\|^2_2}{\sum_{\pi\in\binom{[m]}{\beta_k}}^{}\|\mathbf{ a}_{i(\pi,\mathbf{x}_k)}\|^2_2}\frac{1}{2}\frac{\|\mathbf{A}_{\tau_k}\mathbf{x}_k-\mathbf{ b}_{\tau_k}\|^2_{\infty}}{\|\mathbf{a}_{i(\tau_k,\mathbf{x}_k)}\|^2_2}\\
 &=&
 D_{f} ^{\mathbf{x}_{k}^*}(\mathbf{x}_{k},\hat{\mathbf{x}})-\sum_{\tau_k\in\binom{[m]}{\beta_k}}^{}\frac{1}{2}\frac{\|\mathbf{A}_{\tau_k}\mathbf{x}_k-\mathbf{ b}_{\tau_k}\|^2_{\infty}}{\sum_{\pi\in\binom{[m]}{\beta_k}}^{}\|\mathbf{ a}_{i(\pi,\mathbf{x}_k)}\|^2_2}\\
 &=&
  D_{f} ^{\mathbf{x}_{k}^*}(\mathbf{x}_{k},\hat{\mathbf{x}})-\frac{1}{2}\frac{\binom{m}{\beta_k}\beta_k\|\mathbf{A}\mathbf{x}_k-\mathbf{ b}\|^2_2}{\gamma_k\sum_{\pi\in\binom{[m]}{\beta_k}}^{}\|\mathbf{ a}_{i(\pi,\mathbf{x}_k)}\|^2_2}\\
  &\leq&
(1-\frac{\beta_k\tilde{\sigma}_{\min}^2(\mathbf{A})}{2\gamma_km}\cdot\frac{|\hat{\mathbf{ x}}|_{\min}}{|\hat{\mathbf{ x}}|_{\min}+2\lambda}) D_{f} ^{\mathbf{x}_{k}^*}(\mathbf{x}_{k},\hat{\mathbf{x}}),
\end{eqnarray*}
where the last equality can be derived from (\ref{error}). Now considering all indexes $\tau_0,\cdots,\tau_k$ as random variables with values in $\{1,\cdots, m\}$,
and taking the full expectation on both sides,  we can finish the proof of (\ref{a1}).

Note that
$$D_f^{\mathbf{x}^*_{k+1}}(\mathbf{x}_{k+1},\hat{\mathbf{x}})\geq\frac{1}{2}\|\mathbf{x}^{k+1}-\hat{\mathbf{x}}\|_2^2,$$
\noindent and
$$D_f^{\mathbf{x}^*_{0}}(\mathbf{x}_{0},\hat{\mathbf{x}})=\frac{1}{2}\|\hat{\mathbf{x}}\|^2_2+\lambda\|\hat{\mathbf{x}}\|_1.$$
Inductively, we can derive (\ref{A2}).
\end{proof}

Next, we turn to the noisy case by following the idea of proof in\cite{2016Linear}.
\begin{theorem}[Noisy case]\label{NoiseT}
	Assume that a noisy observed data $\mathbf{b}^{\delta}\in\mathbb{R}^m$ with $\|\mathbf{b}^{\delta}-\mathbf{b}\|_2\leq\delta$ is given, where $\mathbf{b}=\mathbf{ A}\hat{\mathbf{ x}}$. If the sequence $\{\mathbf{x}_k\}$ generated by SSKM method in Algorithm \ref{A1} are computed by $\mathbf{b}^{\delta}$. Then, with the same contraction factor $q_i$ as in the noiseless case,
	for the inexact step, we can have,
	$$\mathbb{E}[\|\mathbf{x}_{k+1}-\hat{\mathbf{x}}\|_2]\leq \sqrt{\prod_{i=0}^{k}q_{i} \cdot\left(2\lambda\|\hat{\mathbf{x}}\|_{1}+\|\hat{\mathbf{x}}\|_{2}^{2}\right)}+\sqrt{ \frac{\sum_{i=0}^{k}q_i\delta^2}{2}},$$
	and for the exact step,
	$$\mathbb{E}[\|\mathbf{x}_{k+1}-\hat{\mathbf{x}}\|_2]\leq \sqrt{\prod_{i=0}^{k}q_{i} \cdot\left(2\lambda\|\hat{\mathbf{x}}\|_{1}+\|\hat{\mathbf{x}}\|_{2}^{2}\right)}+\sqrt{\sum_{i=0}^{k}q_i\delta^2 \cdot \frac{1+4\lambda\|\mathbf{ A}\|_{1,2}}{2}}.$$
\end{theorem}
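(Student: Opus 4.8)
The plan is to run the argument of Theorem~\ref{main} almost verbatim, carrying along the extra terms produced by using $\mathbf b^\delta$ in place of $\mathbf b$; the contraction factors $q_k$ will be untouched, and only a controlled $O(\delta^2)$ residual will accumulate. The one new obstacle is that the hyperplane $H(\mathbf a_{i_k},b^\delta_{i_k})$ onto which we (Bregman-)project no longer contains $\hat{\mathbf x}$, so \eqref{BIN} cannot be invoked directly. Instead I would start from the three-point identity
\[
D_f^{\mathbf x^*_{k+1}}(\mathbf x_{k+1},\hat{\mathbf x})
=D_f^{\mathbf x^*_{k}}(\mathbf x_{k},\hat{\mathbf x})-D_f^{\mathbf x^*_{k}}(\mathbf x_{k},\mathbf x_{k+1})
+\langle \mathbf x^*_{k}-\mathbf x^*_{k+1},\,\hat{\mathbf x}-\mathbf x_{k+1}\rangle,
\]
valid for any admissible subgradients, combine it with $\mathbf x^*_{k}-\mathbf x^*_{k+1}=t_k\mathbf a_{i_k}$, with $\langle\mathbf a_{i_k},\hat{\mathbf x}\rangle=b_{i_k}$, and with the strong-convexity bound $D_f^{\mathbf x^*_{k}}(\mathbf x_{k},\mathbf x_{k+1})\ge\tfrac12\|\mathbf x_k-\mathbf x_{k+1}\|_2^2$. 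Set $\eta_k:=b_{i_k}-b^\delta_{i_k}$, so $|\eta_k|\le\|\mathbf b-\mathbf b^\delta\|_\infty\le\delta$, and let $r_k:=\langle\mathbf a_{i_k},\mathbf x_k\rangle-b_{i_k}$ be the \emph{true} residual at $i_k$. For the inexact step $t_k=\langle\mathbf a_{i_k},\mathbf x_k\rangle-b^\delta_{i_k}=r_k+\eta_k$, and (after a Cauchy--Schwarz step on $\langle\mathbf a_{i_k},\mathbf x_k-\mathbf x_{k+1}\rangle$) the cross terms collapse to $-\tfrac12 t_k^2+t_k\eta_k=-\tfrac12 r_k^2+\tfrac12\eta_k^2$, giving
\[
D_f^{\mathbf x^*_{k+1}}(\mathbf x_{k+1},\hat{\mathbf x})\le D_f^{\mathbf x^*_{k}}(\mathbf x_{k},\hat{\mathbf x})-\tfrac12\,\frac{(\langle\mathbf a_{i_k},\mathbf x_k\rangle-b_{i_k})^2}{\|\mathbf a_{i_k}\|_2^2}+\tfrac12\delta^2 .
\]
For the exact step $t_k$ is not the residual; since $\nabla f^*=S_\lambda$ is $1$-Lipschitz and moves each coordinate by at most $\lambda$, I would bound $|t_k-(\langle\mathbf a_{i_k},\mathbf x_k\rangle-b^\delta_{i_k})|\le 2\lambda\|\mathbf a_{i_k}\|_1$, which feeds a further term proportional to $\lambda\|\mathbf A\|_{1,2}\,\delta$ into the estimate and is responsible for the factor $1+4\lambda\|\mathbf A\|_{1,2}$.

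Next I would take the conditional expectation over $\tau_k$. Because the rows are normalized, this sampling is uniform on $\binom{[m]}{\beta_k}$, so the term $\tfrac12\,\mathbb E_{\tau_k}\!\big[(\langle\mathbf a_{i_k},\mathbf x_k\rangle-b_{i_k})^2/\|\mathbf a_{i_k}\|_2^2\big]$ is handled exactly as in the proof of Theorem~\ref{main}: summing over $\tau_k$, using the ratio $\gamma_k$, and applying the error bound \eqref{error} of Lemma~\ref{errorbound}, it dominates $(1-q_k)\,D_f^{\mathbf x^*_k}(\mathbf x_k,\hat{\mathbf x})$ (here one also uses $\|\mathbf b-\mathbf b^\delta\|_2\le\delta$ to compare the noisy and true residuals wherever needed). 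The perturbation terms average to a multiple of $\delta^2$ (respectively $(1+4\lambda\|\mathbf A\|_{1,2})\delta^2$). This yields a recursion of the form
\[
\mathbb E\big[D_f^{\mathbf x^*_{k+1}}(\mathbf x_{k+1},\hat{\mathbf x})\big]\le q_k\,\mathbb E\big[D_f^{\mathbf x^*_{k}}(\mathbf x_{k},\hat{\mathbf x})\big]+C_k\delta^2 ,
\]
with $C_k$ an absolute constant in the inexact case and proportional to $1+4\lambda\|\mathbf A\|_{1,2}$ in the exact case.

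Finally I would unroll this recursion from $\mathbf x_0=\mathbf x_0^*=\bm 0$: iterating and bounding the tail products $\prod_{i=j+1}^{k}q_i\le 1$ (using $0<q_i<1$) gives a bound of the form $\big(\prod_{i=0}^{k}q_i\big)D_f^{\mathbf x_0^*}(\mathbf x_0,\hat{\mathbf x})$ plus a weighted sum of the $\delta^2$-terms, where $D_f^{\mathbf x_0^*}(\mathbf x_0,\hat{\mathbf x})=\tfrac12\|\hat{\mathbf x}\|_2^2+\lambda\|\hat{\mathbf x}\|_1$. Converting via $\tfrac12\|\mathbf x_{k+1}-\hat{\mathbf x}\|_2^2\le D_f^{\mathbf x^*_{k+1}}(\mathbf x_{k+1},\hat{\mathbf x})$, Jensen's inequality $\mathbb E\|\cdot\|_2\le(\mathbb E\|\cdot\|_2^2)^{1/2}$, and $\sqrt{a+b}\le\sqrt a+\sqrt b$ then produces the two displayed estimates, the first summand reproducing the noiseless bound \eqref{A2} and the second being the noise floor.

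The main obstacle is the one-step estimate in the first step: because $\hat{\mathbf x}$ is no longer on the active hyperplane, one must reorganize the cross term $t_k\eta_k$ so that the entire $-\tfrac12 r_k^2$ contraction survives — this is exactly what keeps the rate $q_k$ the same as in the noiseless case — while the leftover collapses to a clean $O(\delta^2)$ quantity; and for the exact step one additionally needs the elementary but slightly delicate estimate controlling the deviation of the exact step length from the plain residual in terms of $\lambda\|\mathbf A\|_{1,2}$. Everything downstream — averaging over $\tau_k$, invoking the error bound, and unrolling — is routine and parallels the proof of Theorem~\ref{main}.
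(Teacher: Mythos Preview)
Your proposal is correct and reaches the same one-step recursion and final bounds as the paper, but the organization of the key one-step estimate differs.

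The paper does \emph{not} start from the three-point identity. Instead it introduces an auxiliary point $\mathbf x_k^\delta:=\hat{\mathbf x}+\frac{b_{i_k}^\delta-b_{i_k}}{\|\mathbf a_{i_k}\|_2^2}\mathbf a_{i_k}\in H(\mathbf a_{i_k},b_{i_k}^\delta)$, applies the Bregman-projection inequality \eqref{BIN} of Lemma~\ref{t1} with $\mathbf y=\mathbf x_k^\delta$ (for the inexact step it cites Theorem~2.8 of \cite{lorenz2014the} to justify the same inequality), and then algebraically shifts from $\mathbf x_k^\delta$ back to $\hat{\mathbf x}$, producing the extra inner product $\langle\mathbf x_{k+1}^*-\mathbf x_k^*,\mathbf x_k^\delta-\hat{\mathbf x}\rangle$. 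Because $\mathbf x_k^\delta-\hat{\mathbf x}$ is a scalar multiple of $\mathbf a_{i_k}$, this inner product is computed exactly for the inexact step (no Young/Cauchy--Schwarz needed), and the cross terms cancel purely algebraically. For the exact step the paper uses $\mathbf x_k^*=\mathbf x_k+\lambda\mathbf s_k$ with $\|\mathbf s_k\|_\infty\le 1$ and $\langle\mathbf a_{i_k},\mathbf x_{k+1}\rangle=b_{i_k}^\delta$---the same observation underlying your bound $|t_k-(\langle\mathbf a_{i_k},\mathbf x_k\rangle-b_{i_k}^\delta)|\le 2\lambda\|\mathbf a_{i_k}\|_1$.

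Your route via the three-point identity plus strong convexity plus a Young-inequality step is a legitimate alternative that is arguably more self-contained: it avoids the auxiliary point and the external citation for the inexact case. The paper's route buys a cleaner algebraic cancellation (equalities rather than inequalities up to the noise term) and reuses Lemma~\ref{t1} more directly. Downstream---taking the expectation over $\tau_k$, invoking Lemma~\ref{errorbound}, unrolling the recursion, and converting to $\|\mathbf x_{k+1}-\hat{\mathbf x}\|_2$---the two arguments coincide.
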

\begin{proof}
Define $\mathbf{x}_{k}^{\delta}:=\hat{\mathbf{x}}+\frac{b_{i_{k}}^{\delta}-b_{i_{k}}}{\left\|\mathbf{a}_{i_{k}}\right\|_{2}^{2}} \cdot\mathbf{a}_{i_{k}}$, then we can find $\mathbf{x}^{\delta}_k\in H\left(\mathbf{a}_{i_{k}}, b_{i_{k}}^{\delta}\right)$. By Lemma \ref{t1}, we have the inequality below,
\begin{eqnarray}\label{ne1}
D_{f}^{\mathbf{x}_{k+1}^{*}}\left(\mathbf{x}_{k+1}, \mathbf{x}_{k}^{\delta}\right) \leq D_{f}^{\mathbf{x}_{k}^{*}}\left(\mathbf{x}_{k}, \mathbf{x}_{k}^{\delta}\right)-\frac{1}{2} \cdot \frac{\left(\left\langle \mathbf{a}_{i_{k}}, \mathbf{x}_{k}\right\rangle-b_{i_{k}}^{\delta}\right)^{2}}{\left\|\mathbf{a}_{i_{k}}\right\|_{2}^{2}}.
\end{eqnarray}

\noindent
Unfolding the expression of $D_{f}^{\mathbf{x}_{k+1}^{*}}\left(\mathbf{x}_{k+1}, \mathbf{x}_{k}^{\delta}\right)$ and $D_{f}^{\mathbf{x}_{k}^{*}}\left(\mathbf{x}_{k}, \mathbf{x}_{k}^{\delta}\right)$, and plugging $f(\hat{\mathbf{x}})$ into both sides of (\ref{ne1}),  we obtain the inequality
\begin{eqnarray}\label{n1}
D_{f}^{\mathbf{x}_{k+1}^{*}}\left(\mathbf{x}_{k+1}, \hat{\mathbf{x}}\right) &\leq& D_{f}^{\mathbf{x}_{k}^{*}}\left(\mathbf{x}_{k}, \hat{\mathbf{x}}\right)-\frac{1}{2} \cdot \frac{\left(\left\langle\mathbf{a}_{i_{k}}, \mathbf{x}_{k}\right\rangle-b_{i_{k}}^{\delta}\right)^{2}}{\left\|\mathbf{a}_{i_{k}}\right\|_{2}^{2}}\nonumber\\
&~&+\left\langle\mathbf{x}_{k+1}^{*}-\mathbf{x}_{k}^{*}, \mathbf{x}_{k}^{\delta}-\hat{\mathbf{x}}\right\rangle.
\end{eqnarray}
In the remaining, we prove Theorem \ref{NoiseT} from the cases of exact step and the in-exact step.

$\textbf{Inexact step.}$ For the inexact step, observe that $\mathbf{x}_{k+1}^{*}-\mathbf{x}_{k}^{*}=-\frac{\left\langle \mathbf{a}_{i_{k}}, \mathbf{x}_{k}\right\rangle-b_{i_{k}}^{\delta}}{\left\|\mathbf{a}_{i_{k}}\right\|_{2}^{2}} \cdot \mathbf{a}_{i_{k}}$. Then, we bound the inner product in (\ref{n1}) as below,
\begin{eqnarray}\label{s1}
\begin{aligned}
\left\langle \mathbf{x}_{k+1}^{*}-\mathbf{x}_{k}^{*}, \mathbf{x}_{k}^{\delta}-\hat{\mathbf{x}}\right\rangle &=\frac{b_{i_{k}}^{\delta}-b_{i_{k}}}{\left\|\mathbf{a}_{i_{k}}\right\|_{2}^2} \cdot\left\langle \mathbf{x}_{k+1}^{*}-\mathbf{x}_{k}^{*}, \mathbf{a}_{i_{k}}\right\rangle \\
&=\frac{\left(b_{i_{k}}^{\delta}-b_{i_{k}}\right)^{2}}{\left\|\mathbf{a}_{i_{k}}\right\|_{2}^{2}}-\frac{\left(b_{i_{k}}^{\delta}-b_{i_{k}}\right) \cdot\left(\left\langle\mathbf{a}_{i_{k}}, \mathbf{x}_{k}\right\rangle-b_{i_{k}}\right)}{\left\|\mathbf{a}_{i_{k}}\right\|_{2}^{2}} .
\end{aligned}
\end{eqnarray}

\noindent By reformulating
\begin{eqnarray}\label{s2}
-\frac{1}{2} \cdot \frac{\left(\left\langle\mathbf{a}_{i_{k}}, \mathbf{x}_{k}\right\rangle-b_{i_{k}}^{\delta}\right)^{2}}{\left\|\mathbf{a}_{i_{k}}\right\|_{2}^{2}}&=&-\frac{1}{2} \cdot \frac{\left(\left\langle\mathbf{a}_{i_{k}}, \mathbf{x}_{k}\right\rangle-b_{i_{k}}\right)^{2}}{\left\|\mathbf{a}_{i_{k}}\right\|_{2}^{2}}\nonumber\\
&~&+\frac{\left(b_{i_{k}}^{\delta}-b_{i_{k}}\right) \cdot\left(\left\langle \mathbf{a}_{i_{k}}, \mathbf{x}_{k}\right\rangle-b_{i_{k}}\right)}{\left\|\mathbf{a}_{i_{k}}\right\|_{2}^{2}}-\frac{1}{2} \cdot \frac{\left(b_{i_{k}}^{\delta}-b_{i_{k}}\right)^{2}}{\left\|\mathbf{a}_{i_{k}}\right\|_{2}^{2}}
\end{eqnarray}
and combining (\ref{s1}) and (\ref{s2}) into (\ref{n1}), we have
\begin{eqnarray}
D_{f}^{\mathbf{x}_{k+1}^{*}}\left(\mathbf{x}_{k+1}, \hat{\mathbf{x}}\right) \leq D_{f}^{\mathbf{x}_{k}^{*}}\left(\mathbf{x}_{k}, \hat{\mathbf{x}}\right)-\frac{1}{2} \cdot \frac{\left(\left\langle \mathbf{a}_{i_{k}}, \mathbf{x}_{k}\right\rangle-b_{i_{k}}\right)^{2}}{\left\|\mathbf{a}_{i_{k}}\right\|_{2}^{2}}+\frac{1}{2} \cdot \frac{\left(b_{i_{k}}^{\delta}-b_{i_{k}}\right)^{2}}{\left\|\mathbf{a}_{i_{k}}\right\|_{2}^{2}}.
\end{eqnarray}
Similar to the proof in the noiseless situation, we have
\begin{eqnarray}
\mathbb{E}\left[D_{f}^{\mathbf{x}_{k+1}^{*}}\left(\mathbf{x}_{k+1}, \hat{\mathbf{x}}\right)\right] \leq q_k \cdot \mathbb{E}\left[D_{f}^{\mathbf{x}_{k}^{*}}\left(\mathbf{x}_{k}, \hat{\mathbf{x}}\right)\right]+\frac{1}{2}\left\|\mathbf{b}^{\delta}-\mathbf{b}\right\|_{\infty}^{2},
\end{eqnarray}
where
$\sum_{\tau_k\in\binom{[m]}{\beta_k}}^{}\frac{1}{2}\frac{\left(b_{i(\tau_k, \mathbf{x}_k)}^{\delta}-b_{i(\tau_k,\mathbf{x}_k)}\right)^{2}}{\sum_{\pi\in\binom{[m]}{\beta_k}}^{}\|\mathbf{ a}_{i(\pi,\mathbf{x}_k)}\|^2}\leq\frac{1}{2} \cdot \frac{\binom{m}{\beta_k}\left\|\mathbf{b}^{\delta}-\mathbf{b}\right\|_{\infty}^{2}}{\binom{m}{\beta_k}}=\frac{1}{2}\left\|\mathbf{b}^{\delta}-\mathbf{b}\right\|_{\infty}^{2}.$

\noindent
Inductively, we get 
\begin{eqnarray*}
\mathbb{E}\left[D_{f}^{\mathbf{x}_{k+1}^{*}}\left(\mathbf{x}_{k}, \hat{\mathbf{x}}\right)\right] \leq \prod_{i=0}^{k}q_{i} \cdot\left(2\lambda\|\hat{\mathbf{x}}\|_{1}+\|\hat{\mathbf{x}}\|_{2}^{2}\right)+ \frac{\sum_{i=0}^{k}q_i}{2} \cdot \left\|\mathbf{b}^{\delta}-\mathbf{b}\right\|_{\infty}^{2}.
\end{eqnarray*}
Finally,
\begin{eqnarray*}
\mathbb{E}[\|\mathbf{x}_{k+1}-\hat{\mathbf{x}}\|_2]\leq \sqrt{\prod_{i=0}^{k}q_{i} \cdot\left(2\lambda\|\hat{\mathbf{x}}\|_{1}+\|\hat{\mathbf{x}}\|_{2}^{2}\right)}+\sqrt{ \frac{\sum_{i=0}^{k}q_i}{2} \cdot\left\|\mathbf{b}^{\delta}-\mathbf{b}\right\|_{\infty}^{2}}.
\end{eqnarray*}

$\textbf{Exact step.}$ The idea to prove Theorem \ref{NoiseT} in the exact step case is similar. But for the exact step, $\mathbf{x}_k^*=\mathbf{x}_k+\lambda\mathbf{s}_k$, where $\|\mathbf{s}_k\|_{\infty}\leq 1$. Noting that $\langle\mathbf{x}_{k+1},\mathbf{a}_{i_k}\rangle=b_{i_k}$, thus we derive that
\begin{eqnarray*}
	\left\langle \mathbf{x}_{k+1}^{*}-\mathbf{x}_{k}^{*}, \mathbf{x}_{k}^{\delta}-\hat{\mathbf{x}}\right\rangle &=&\frac{b_{i_{k}}^{\delta}-b_{i_{k}}}{\left\|\mathbf{a}_{i_{k}}\right\|_{2}^{2}} \cdot\left(\left\langle \mathbf{x}_{k+1}-\mathbf{x}_{k}, \mathbf{a}_{i_{k}}\right\rangle+\left\langle \mathbf{s}_{k+1}-\mathbf{s}_{k}, \mathbf{a}_{i_{k}}\right\rangle\right) \\
	&=&\frac{b_{i_{k}}^{\delta}-b_{i_{k}}}{\left\|\mathbf{a}_{i_{k}}\right\|_{2}^{2}} \cdot \left(b_{i_k}^{\delta}-b_{i_k}+b_{i_k}-\left\langle \mathbf{x}_{k}, \mathbf{a}_{i_{k}}\right\rangle+\left\langle \mathbf{s}_{k+1}-\mathbf{s}_{k}, \mathbf{a}_{i_{k}}\right\rangle\right)\\
	& \leq& \frac{\left(b_{i_{k}}^{\delta}-b_{i_{k}}\right)^{2}}{\left\|\mathbf{a}_{i_{k}}\right\|_{2}^{2}}-\frac{\left(b_{i_{k}}^{\delta}-b_{i_{k}}\right) \cdot\left(\left\langle \mathbf{a}_{i_{k}}, \mathbf{x}_{k}\right\rangle-b_{i_{k}}\right)}{\left\|\mathbf{a}_{i_{k}}\right\|_{2}^{2}}+\frac{2\lambda\left|b_{i_{k}}^{\delta}-b_{i_{k}}\right| \cdot\left\|\mathbf{a}_{i_{k}}\right\|_{1}}{\left\|\mathbf{a}_{i_{k}}\right\|_{2}^{2}} .
\end{eqnarray*}

\noindent
Utilizing the result above,  and the proof in the case of inexact step, we have
\begin{eqnarray*}
\mathbb{E}[\|\mathbf{x}_{k+1}-\hat{\mathbf{x}}\|_2]\leq \sqrt{\prod_{i=0}^{k}q_{i} \cdot\left(2\lambda\|\hat{\mathbf{x}}\|_{1}+\|\hat{\mathbf{x}}\|_{2}^{2}\right)}+\sqrt{ \sum_{i=0}^{k}q_i\left\|\mathbf{b}^{\delta}-\mathbf{b}\right\|_{\infty}^{2} \cdot \frac{1+4\lambda\|\mathbf{ A}\|_{1,2}}{2}},
\end{eqnarray*}
which completes the proof.
\end{proof}
\textit{Remark: Comparing the error bound between SSKM method using the exact step and inexact step, we can find that inexact step can improve the performance of  SSKM method by a factor about $ \frac{1+4\lambda\|\mathbf{ A}\|_{1,2}}{2}$ compared with exact step. In the forthcoming experiments, this theoretical result will be verified.}
\section{Numerical Simulation}
\subsection{Experimental setup}
In this section, we will testify the performance of the SSKM method. The results of the tests demonstrate that SSKM method is numerically advantageous over the randomized sparse Kaczmarz method, which is denoted as SRK in this section. 

In the test, we are going to solve the linear system (\ref{problem}) by two type of different coefficient matrices $\mathbf{ A}\in\mathbb{R}^{m\times n}$. One type is the random matrix by using the MATLAB function
'randn', which produces independent standard normal entries for the matrix $\mathbf{A}$. 
Another type of matrices is  originated in different applications such as linear programming, combinatorial optimization, DNA electrophoresis model, and world city network.

In our implementations, solution $\hat{\mathbf{x}}\in\mathbb{R}^n$
is randomly generated
by using the MATLAB function "randn". The nonzero location is chosen randomly according to the sparsity. The observed data $\mathbf{b}\in\mathbb{R}^m$ is calculated by $\mathbf{A}\hat{\mathbf{x}}$.

All computations are started from the
initial vector $\mathbf{x}_0 = \bm{0}$, and terminated once the mean square error (MSE), defined by
\begin{eqnarray}
	\textrm{MSE} = \frac{\|\mathbf{x}_k-\hat{\mathbf{x}}\|_2^2}{\|\hat{\mathbf{x}}\|_2^2},
\end{eqnarray}
less than $\textrm{MSE}<10^{-6}$, or the number of iteration steps exceeds $200000$. The latter is labeled by '--' in Table \ref{Tab:1}. All of the experiments are carried out by using MATLAB(Version R2019a) on a personal computer with 2.70GHZ CPU(Intel(R) Core(TM) i7-6820HQ), 32GB memory, and Windows operating system(Windows 10).

\subsection{Parameter tuning}
In Algorithm \ref{A1}, SSKM method is affected by the parameter $\lambda$ and sampling number $\beta$.  Numerical tests will be applied to show how these two parameters influence the performance.
\begin{figure}
	\centering
	\includegraphics[width=1\textwidth]{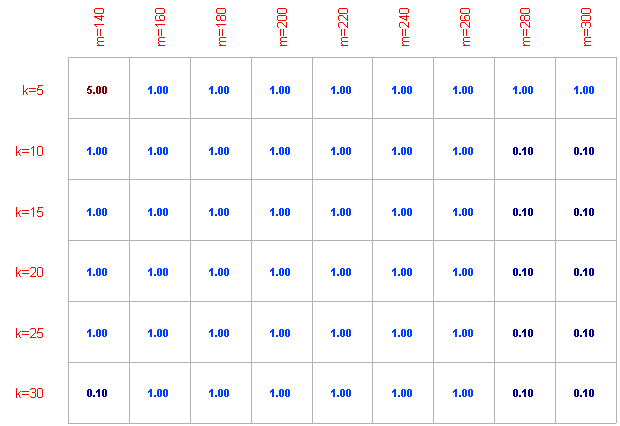}
	\caption{Given $n=200$, for each $k$ and $m$, we apply 100 tests for different $\lambda$ and show the $\lambda$ under which we can obtain the least MSE.}
   \label{lambdaimage}
\end{figure} 

\textbf{Parameter tuning for $\lambda$.}
In this test, let $n=200$, $m=140:20:300$(which means $m$ is range from 140 to 300 with interval 20), and $k=5:5:30$, the candidates of $\lambda$ are $0.01$, $0.1$, $1$, $5$, $10$. For a given $m$ and $n$, under each $\lambda$, we run experiments for $100$ times with different $\mathbf{A}$ and $\hat{\mathbf{ x}}$ by SSKM method with exact step and record the $\lambda$ having the least MSE. $\beta=m/2$ in each step. The results are shown in Fig \ref{lambdaimage}. We can find that the smaller the ratio $m/k$ is, the larger the $\lambda$ is can get a better performance. Especially, when $\lambda=1$, SSKM method can usually achieve the least MSE. As a result, we set $\lambda=1$ in our remaining tests. 

%
\textbf{Parameter tuning for $\beta$.}
In this test, we set $n=500$, $m=200$, $k=20$, $\lambda=1$, and the candidates of $\beta$ are chosen from $1$, $\frac{m}{4}$, $\frac{m}{2}$, $m$. $\beta_k=\beta$ for all iterations. Under each $\beta$, we also run $100$ times independent experiments. We applied SSKM method by both exact step and inexact step and record their mean MSE and standard variation. The results are shown in Table \ref{Tab:3}. We can find that the best performance of SSKM method can be achieved when $\beta=50$ for inexact step and $\beta=100$ for exact step. Note that, the results of $\beta=50$ and $\beta=100$ are the same in a level for inexact method, but for exact method, $\beta=100$ can have an obvious advantage. So, in the remaining tests, we will set $\beta=\frac{m}{2}$. 
\begin{table}
	\caption{The comparison of MSE among different $\beta$}
	\label{Tab:3}
	\centering
	\begin{tabular}{|c|c|c|c|c|c|}
		\hline \multicolumn{2}{|c|}{Name}&$\beta=1$&$\beta=50$&$\beta=100$&$\beta=200$\\
		\hline
		\multirow{2}{*}{Inexact}&Mean MSE &0.01&$\mathbf{6.28\times10^{-4}}$&$6.34\times10^{-4}$&$1.73\times10^{-3}$\\
		\cline{2-6}        
		&Variation&$1.71\times10^{-3}$&
		$\mathbf{5.40\times10^{-5}}$&$8.16\times10^{-5}$&$8.71\times10^{-15}$\\
		\hline
		\multirow{2}{*}{Exact}&Mean MSE 
		&$2.37\times10^{-5}$&$1.77\times10^{-13}$&$\mathbf{1.22\times10^{-17}}$&$6.43\times10^{-7}$\\
		\cline{2-6}        
		&Variation&$1.18\times10^{-4}$&$1.07\times10
		^{-12}$&$\mathbf{1.07\times10^{-17}}$&$8.51\times10^{-22}$\\
		\hline
	\end{tabular}
\end{table}
\subsection{Comparisons among state-of-the-arts}
In this experiment, SRK method will be compared with SSKM method. Three aspects will be considered to evaluate their performance, namely MSE, robustness, convergence rate. 

\textbf{MSE comparison.} In the test, given $n=200$, sparsity $k=5:5:30$, $m=140:20:300$. $\lambda=1$ and $\beta=\frac{m}{2}$. Under each $m$ and $k$, we record its mean MSE from 100 tests with different $\hat{\mathbf{x}}$ and $\mathbf{A}$. The results are shown in Figure \ref{MSEc}.
\begin{figure}
	\begin{subfigure}{0.5\textwidth}
		\centering
		\includegraphics[width=1\textwidth]{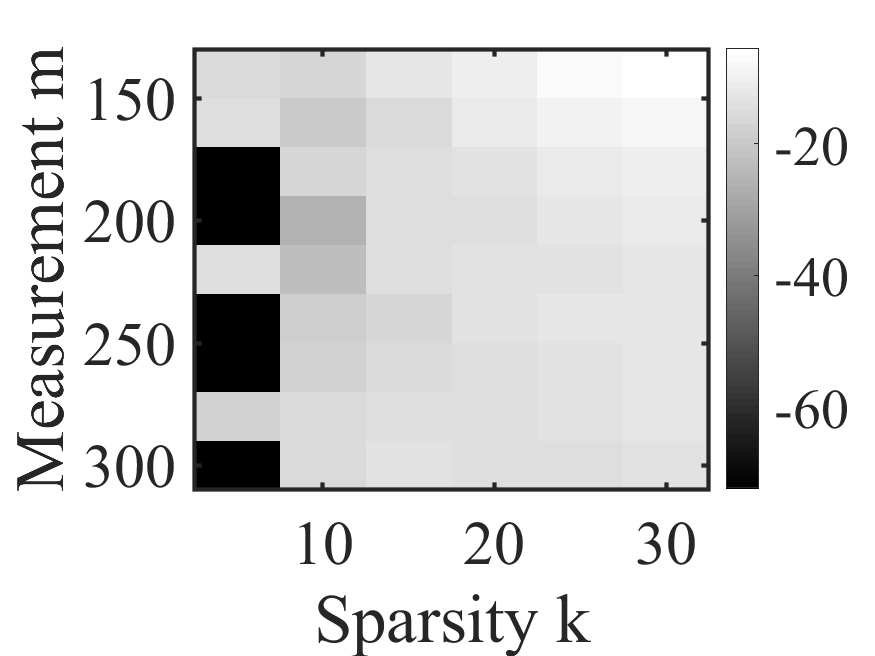}
		\caption{ Recovery results of Exact-SRK.}
	\end{subfigure}
	\begin{subfigure}{0.5\textwidth}
		\centering
		\includegraphics[width=1\textwidth]{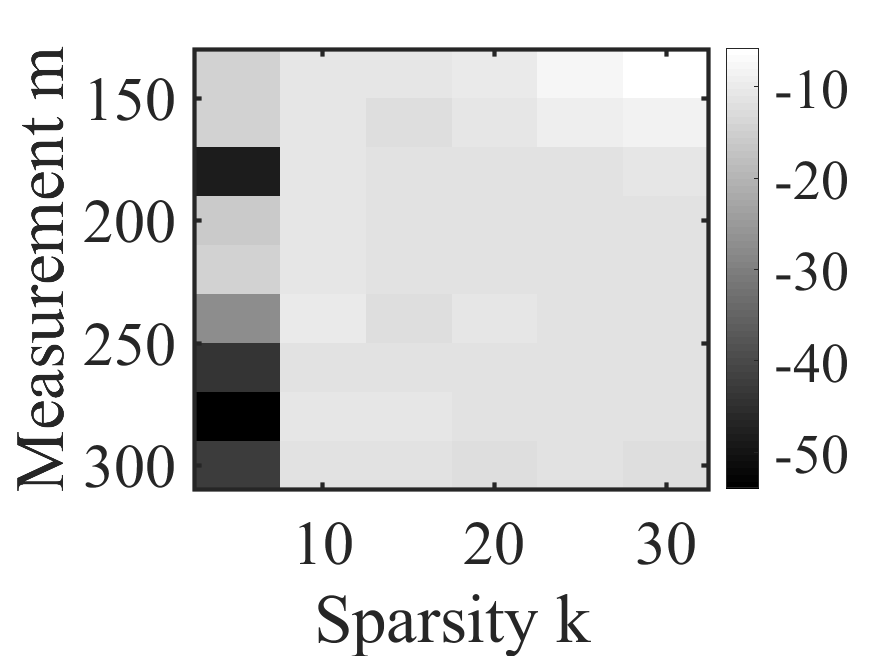}
		\caption{Recovery results of Inexact-SRK.}
	\end{subfigure}
	\begin{subfigure}{0.5\textwidth}
		\centering
		\includegraphics[width=1\textwidth]{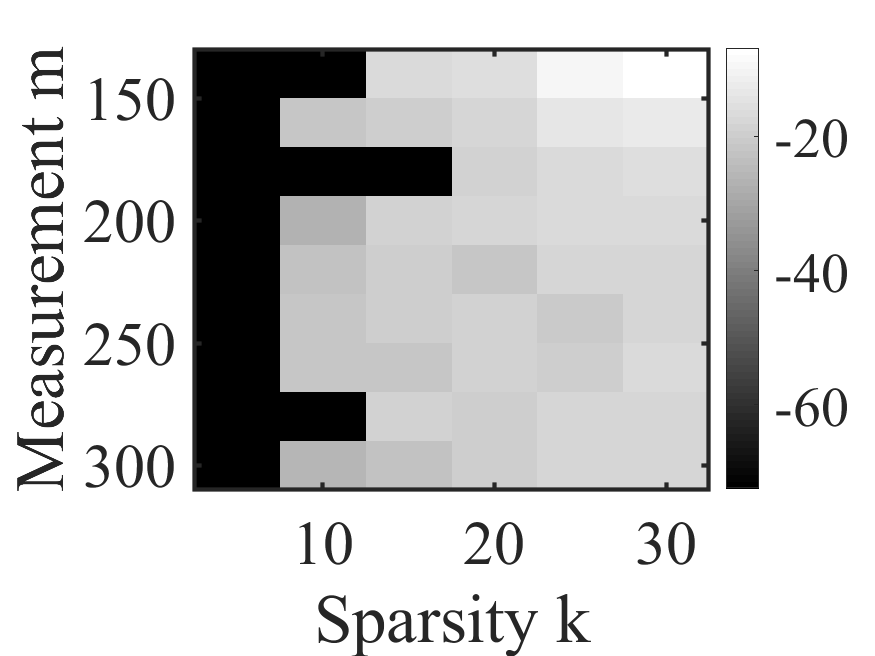}
		\caption{Recovery results of Exact-SSKM.}
	\end{subfigure}
	\begin{subfigure}{0.5\textwidth}
		\centering
		\includegraphics[width=1\textwidth]{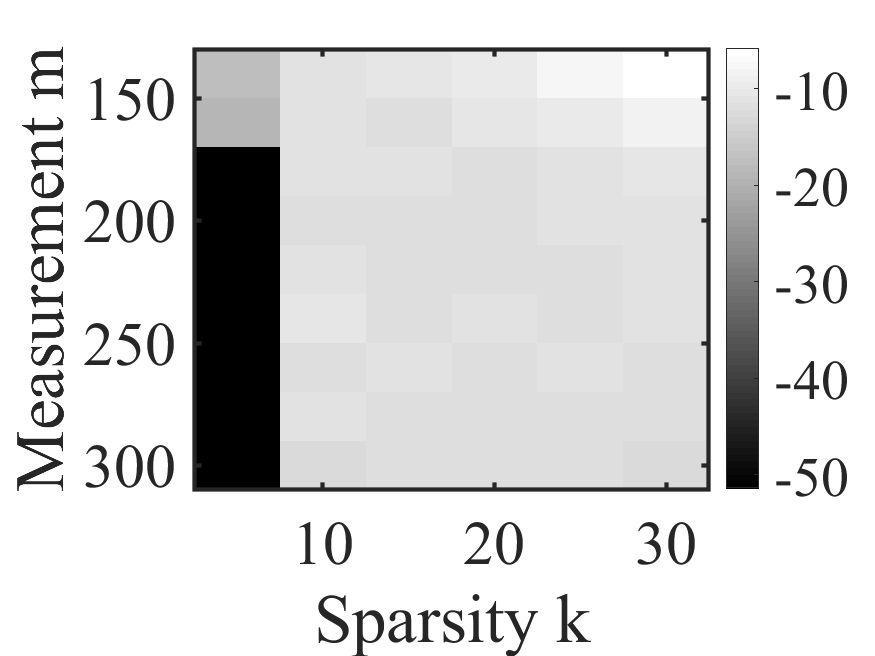}
		\caption{Recovery results of Inexact-SSKM.}
	\end{subfigure}
	\caption{Given $n=200$, testing the performance of different algorithms without noise.}\label{MSEc}
\end{figure} 

From Figure \ref{MSEc}, we can find that for both methods, MSE increases with $m/n$ when $k$ is given. When $m/n$ is fixed, the MSE decreases with sparsity $k$. This phenomena is compatible with commonsense.  Notice that, SSKM method can have a more stable performance and its corresponding MSE is lower. Moreover, we can also find that in the noiseless case, the results of SSKM method utilizing inexact step is worse than the exact step.      

\textbf{Robustness comparison.} In the test, given $n=200$, sparsity $k=5:5:30$, $m=140:20:300$, $\lambda=1$ and $\beta=\frac{m}{2}$. The level of the measurement noise is 10\%. For each $m$ and $k$, we record its mean MSE by 100 tests with different $\hat{\mathbf{x}}$ and $\mathbf{A}$. The results are shown in Figure \ref{Noisec}.
\begin{figure}
	\begin{subfigure}{0.5\textwidth}
		\centering
		\includegraphics[width=1\textwidth]{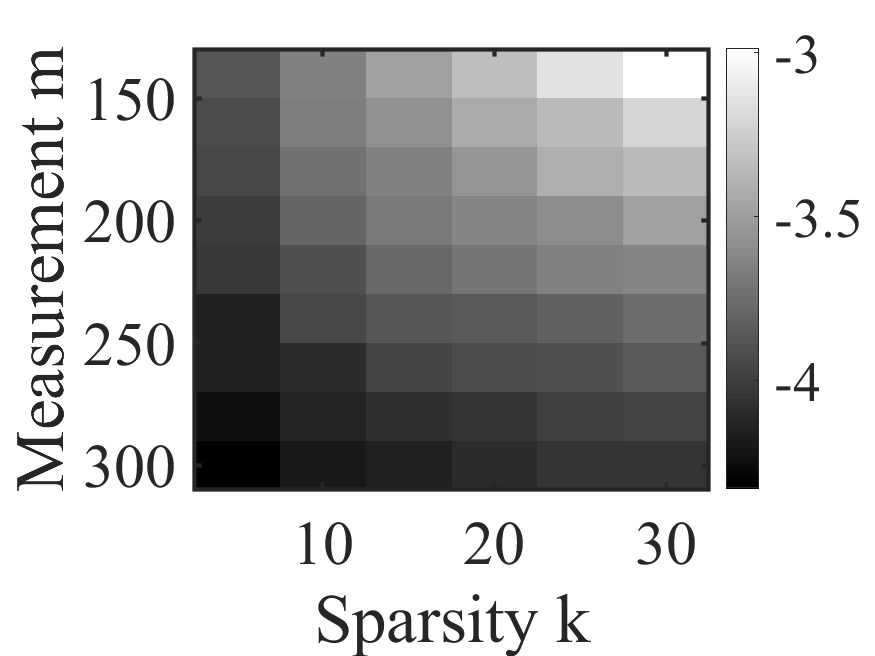}
		\caption{Recovery results of Exact-SRK.}
	\end{subfigure}
	\begin{subfigure}{0.5\textwidth}
		\centering
		\includegraphics[width=1\textwidth]{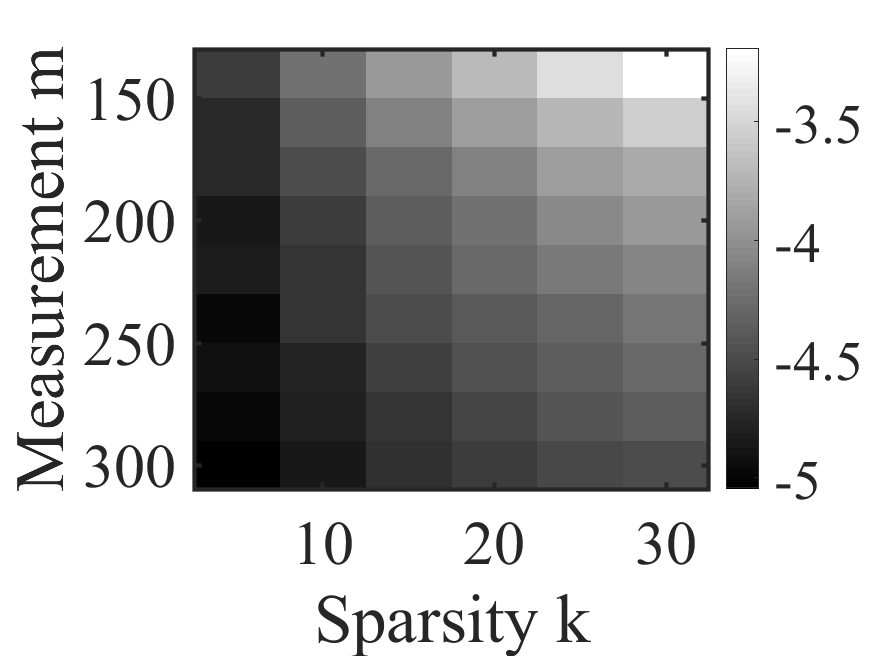}
		\caption{Recovery results of Inexact-SRK.}
	\end{subfigure}
	\begin{subfigure}{0.5\textwidth}
		\centering
		\includegraphics[width=1\textwidth]{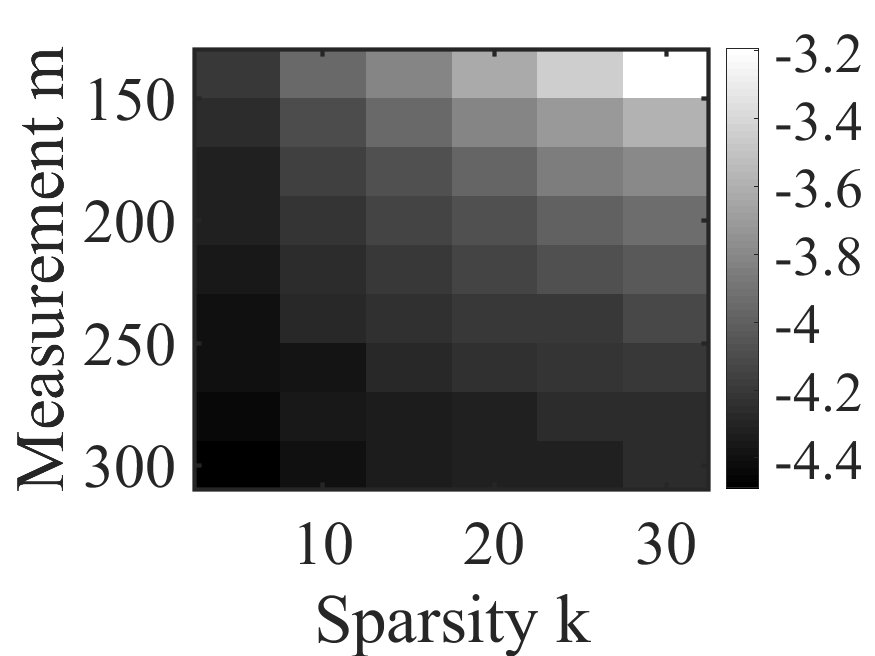}
		\caption{Recovery results of Exact-SSKM.}
	\end{subfigure}
	\begin{subfigure}{0.5\textwidth}
		\centering
		\includegraphics[width=1\textwidth]{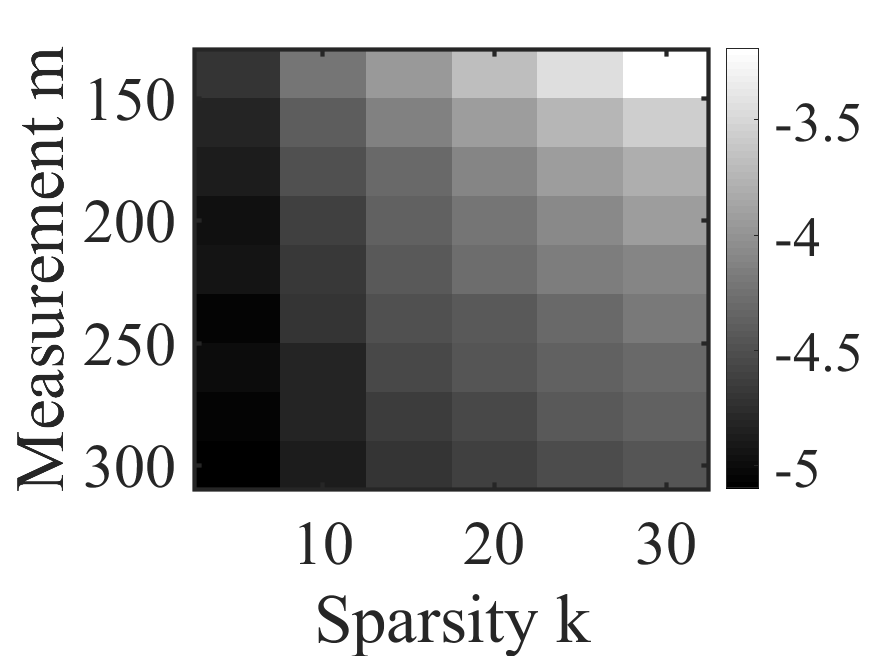}
		\caption{Recovery results of Inexact-SSKM.}
	\end{subfigure}
	\caption{Given $n=200$, testing the performance of different algorithms with the noise}
	\label{Noisec}
\end{figure} 

From Figure \ref{Noisec}, we can find that both methods can resist the noise in some degree. When $m/k$ is larger, we can get a better performance. On the contrary, SSKM method utilizing inexact step performs better than exact step. In \cite{2016Linear}, similar phenomena can also be noticed. It is caused to the corruption by the noise, which makes the estimation calculated by exact step have larger bias. Thus, the efficiency of the exact step becomes lower. 

\textbf{Convergence speed comparison.} In this test, we only consider the noiseless condition. Given $n=200$, sparsity $k=30$, $m$ is chosen from $150, 200, 300$, $\lambda=1$ and $\beta=\frac{m}{2}$. For each $m$, we record its MSE from 100 tests with different $\hat{\mathbf{x}}$ and $\mathbf{A}$. The results are shown in Figure \ref{Convergencerate}.

From Figure \ref{Convergencerate}, we can find that SSKM method can have a faster convergence rate and lower MSE than SRK method.  This phenomena is due the greedy sampling rule, which picks out the most violated row. At the same time, we can also find that SSKM method utilizing exact step can have a faster convergence speed than utilizing inexact step.
\begin{figure}
	\begin{subfigure}{0.5\textwidth}
		\centering
		\includegraphics[width=1\textwidth]{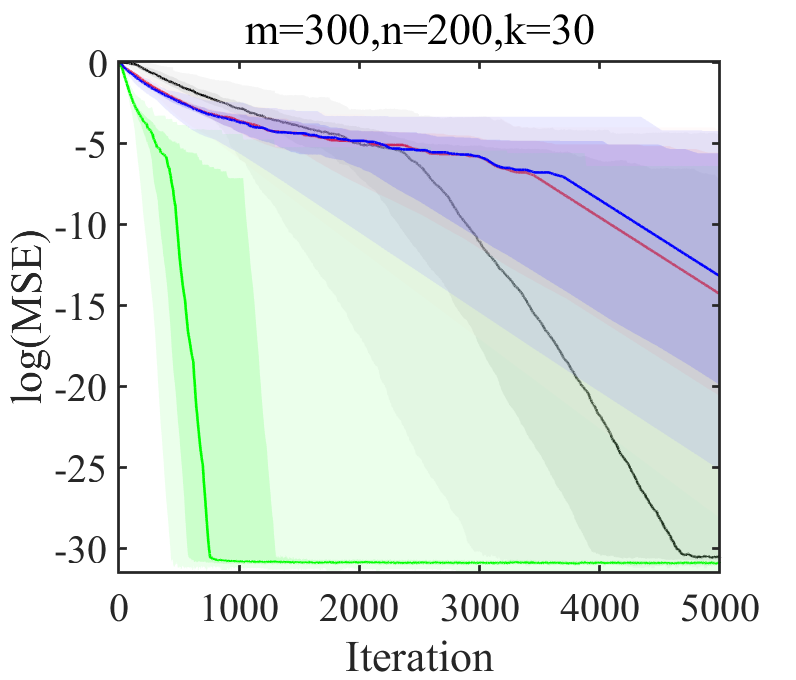}
		\caption{}
	\end{subfigure}
	\begin{subfigure}{0.5\textwidth}
		\centering
		\includegraphics[width=1\textwidth]{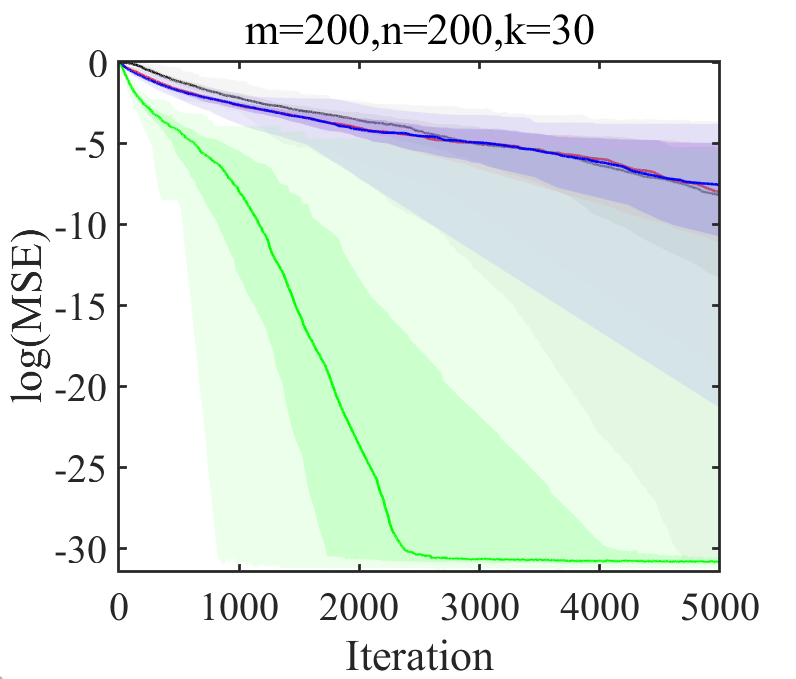}
		\caption{}\label{setup}
	\end{subfigure}
	\begin{subfigure}{1\textwidth}
		\centering
		\includegraphics[width=0.66\textwidth]{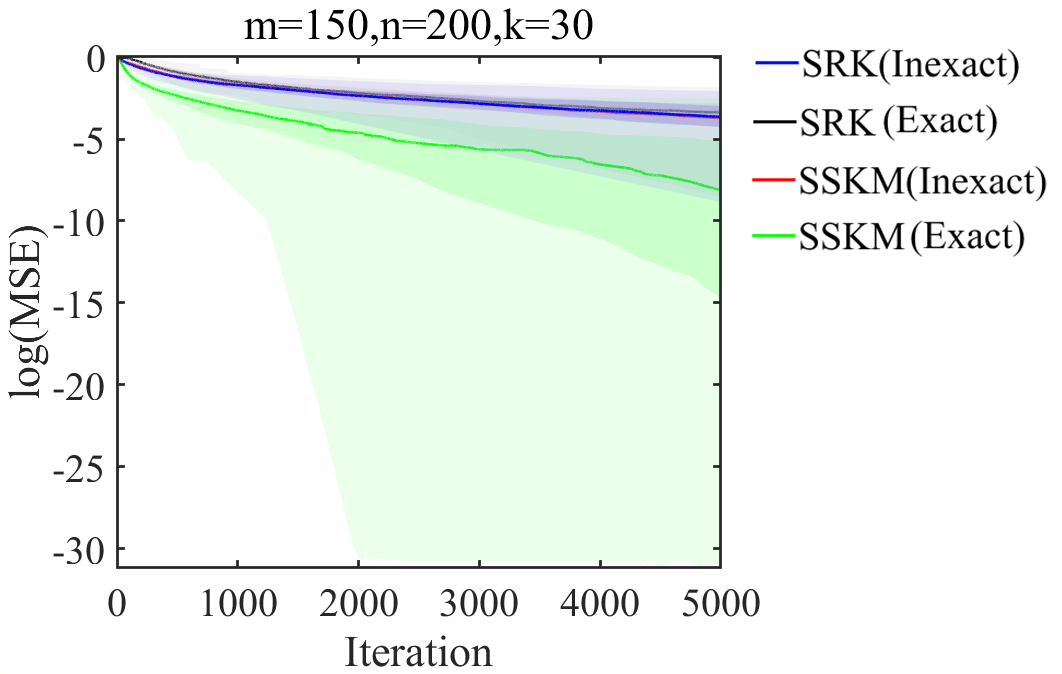}
		\caption{}
	\end{subfigure}
	\caption{Comparison of different methods for convergence speed. Thick line shows median over 100 trials, light area is between minimum and maximum, and darker area indicates 25th and 75th quantile(color figure online).}\label{Convergencerate}
\end{figure} 

\subsection{Real data}
~

\textbf{Real matrix.}
In this simulation, we will verify the performance of SSKM method under some real matrices\cite{realmatrix}. The matrices are collected in \textit{The university of florida sparse matrix collection} which are originated in different kinds of applications. In this test, under each matrix $\mathbf{A}$, we generate 100 different ground truth $\hat{\mathbf{x}}$ and calculate $\mathbf{A}\hat{\mathbf{x}}$ as observed data $\mathbf{b}$. 

CPU and IT mean the arithmetical average of the elapsed CPU
times and the required iteration steps once the MSE is below $10^{-6}$ with respect to 100 times repeated runs of the
corresponding method.
$\text{Cond}(A)$ refers to the condition number of $\mathbf{A}$, and the density of $\mathbf{ A}$ is also defined as
\begin{eqnarray*}
	\text{Density}:=\frac{\text{number of nonzeros of $\mathbf{ A}$}}{m\times n}.
\end{eqnarray*}.  

\noindent We only make comparison in the noiseless case. The SSKM and SRK method utilize exact step. The results are shown in Table \ref{Tab:1}.

From Table \ref{Tab:1}, we can find out that the SSKM method can achieve the best performance. It requires least iterations and CPU time to terminate. Although the sampling rule of the SSKM method demands a larger burden of time at each iteration, the overall iteration times of it can remedy this drawback and achieve a fewer CPU time.  
\begin{table}
	\caption{The results of different methods dealing with the real data.}
	\label{Tab:1}
	\centering
	\begin{tabular}{|c|c|c|c|c|c|c|}
		\hline \multicolumn{2}{|c|}{Name}&model1& Trefethen\_300&WorldCities&Trefethen\_20&flower\_5\_1\\
		\hline  \multicolumn{2}{|c|}{$m\times n$}&$362\times798$&$300\times300$&$315\times 100$&$20\times20$&$211\times201$\\
		\hline
		\multicolumn{2}{|c|}{Density} &0.34\%&5.20\%&23.87\%&39.50\%&1.42\%\\
		\hline  \multicolumn{2}{|c|}{Cond($\mathbf{ A}$)}&17.57&1772.69&66.00&63.09&Inf\\
		\hline
		\multicolumn{2}{|c|}{Sparsity}&20&20&20&20&20\\
		\hline
		\multirow{2}{*}{RK}&IT &--&--&2977.9&11886&--\\
		\cline{2-7}        
		&CPU&--&--&0.41&16.78&--\\
		\hline
		\multirow{2}{*}{SRK}&IT &6023.3&11213&9870.4&27783&4519.8\\
		\cline{2-7}        
		&CPU&0.58&0.56&0.29&0.62&0.17\\
		\hline
		\multirow{2}{*}{SSKM}&IT 
		&884.41&2560.2&2575.5&9395.6&864.72\\
		\cline{2-7}        
		&CPU&0.47&0.49&0.69&0.28&0.15\\
		\hline
	\end{tabular}
\end{table}

\textbf{Phantom picture.}
In this test, we will study an academic tomography problem. The underlying model in the test problem consists of straight X-rays which penetrate the object, afterwards the damping is recorded. According to Lambert-Beer’s law, and after taking the logarithm of the recorded data, the damping is given as a line integral along the X-ray of the object’s attenuation coefficient, which is formulated as a linear equations model $\mathbf{A}\mathbf{x}=\mathbf{b}$.



We used the AIRtools toolbox\cite{2017AIR} to generate the matrix $\mathbf{A}$. In this test, $n=2500$, $m=2049$. The image of interest is shepplogan shown in Figure \ref{imagedata}(a), which is sparse; 
thus we can apply sparse kaczmarz method to recover it from dump. In this test, we compare SSKM with SRK method. The results are shown in Figure \ref{imagedata}. Here, both SSKM and SRK method use the exact step in the test.
\begin{figure}
	\begin{subfigure}{0.32\textwidth}
		\centering
		\includegraphics[width=1\textwidth]{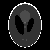}
		\caption{Original.}
	\end{subfigure}
	\begin{subfigure}{0.32\textwidth}
		\centering
		\includegraphics[width=1\textwidth]{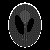}
		\caption{Recovered by SSKM.}
	\end{subfigure}
	\begin{subfigure}{0.32\textwidth}
		\centering
		\includegraphics[width=1\textwidth]{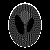}
		\caption{Recovered by SRK.}
	\end{subfigure}
	\begin{subfigure}{1\textwidth}
		\centering
		\includegraphics[width=1\textwidth]{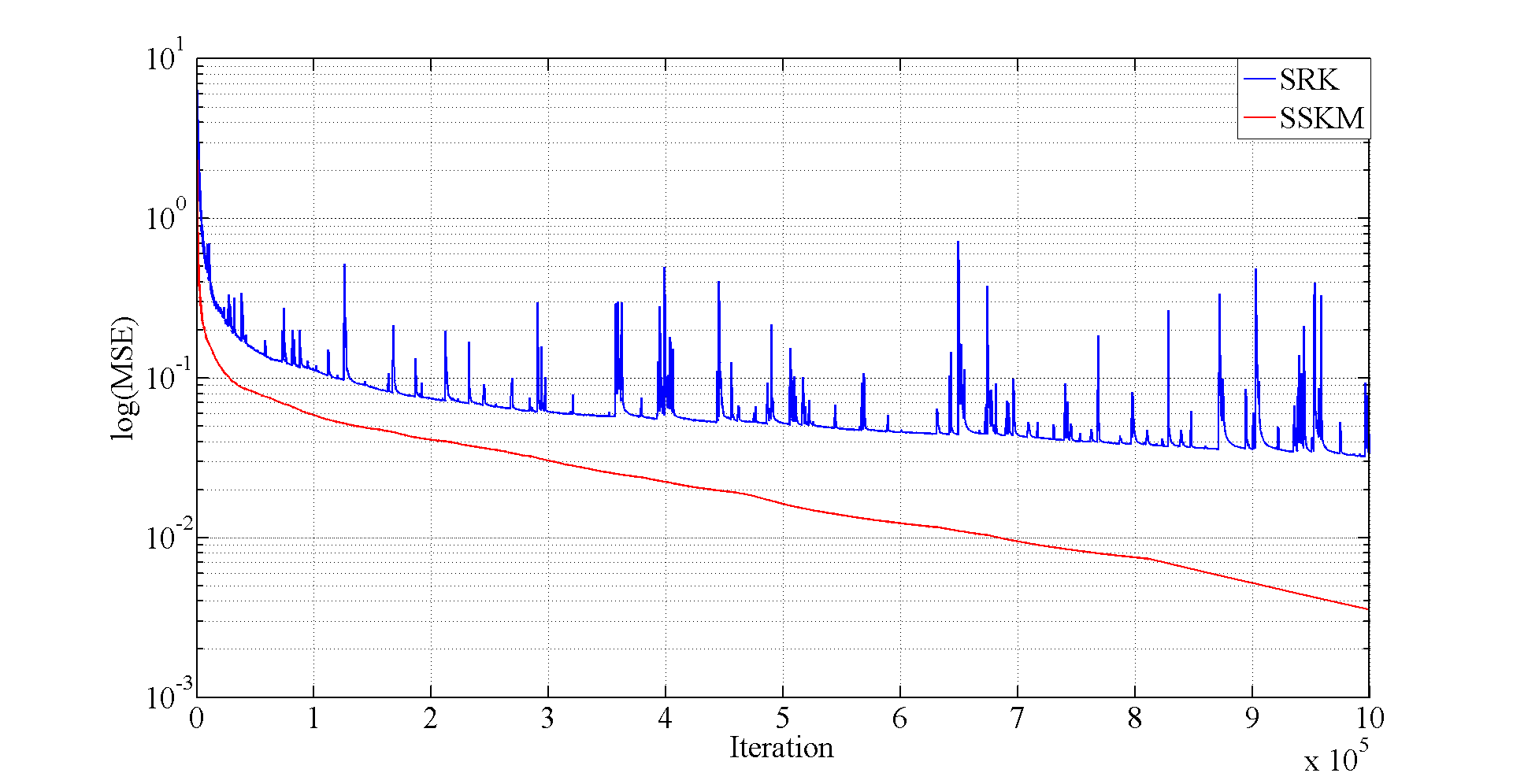}
		\caption{The curves of MSE for SSKM and SRK method.}
	\end{subfigure}
	\caption{Experiment results. (a) is the original picture. (b) is the result recovered by SSKM method.(c) is the result recovered by the SRK method. (d) is the error curve.}\label{imagedata}
\end{figure} 

From Figure \ref{imagedata}, we can find that the SSKM method has advantages over the SRK by the quality of the recovered image. Although the matrix is of rank deficiency, by utilizing the sparsity structure of image, we can still find the ground truth. It will shed on light on more applications to utilize prior information to recover signal of interest with fewer measurements. 

\section{Conclusion}
In this paper, we introduce the SSKM method to find the sparse solutions of linear systems. It combines the Bregman projection and the Sampling Kaczmarz-Motzkin method. The former helps us to find the sparse solution implicitly; The latter is employed to accelerate the convergence rate of the method. Theoretically, we prove linear convergence of the SSKM method for the noiseless and noisy cases respectively. Numerical tests from both simulations and applications demonstrate the effectiveness of the proposed method.

\section{Acknowledgment}
The paper is granted by the National Natural Science Foundation:11971480, 61977065, and the Hunan Province excellent youngster Foundation:2020JJ3038.  
\bibliographystyle{unsrt}
\bibliography{references}

\end{document}